\definecolor{gr}{rgb}   {0.,   0.69,   0.23 }
\definecolor{bl}{rgb}   {0.,   0.5,   1. }
\definecolor{mg}{rgb}   {0.85,  0.,    0.85}
\definecolor{yl}{rgb}   {0.8,  0.7,   0.}
\definecolor{or}{rgb}  {0.7,0.2,0.2}
\newtheorem{theorem}{Theorem} [section]
\newtheorem{lemma}[theorem]{Lemma}
\newtheorem{proposition}[theorem]{Proposition}
\newtheorem{remark}[theorem]{Remark}
\newcommand{\noi}{\noindent}
\newcommand{\Z}{\mathbb{Z}}
\newcommand{\R}{\mathbb{R}}
\newcommand{\T}{\mathbb{T}}
\let\P= \undefined
\newcommand{\P}{\mathbf{P}}
\newcommand{\E}{\mathbb{E}}
\newcommand{\be}{\beta}
\newcommand{\dl}{\delta}
\newcommand{\eps}{\varepsilon}
\newcommand{\G}{\Gamma}
\newcommand{\ld}{\lambda}
\newcommand{\s}{\sigma}
\newcommand{\ft}{\widehat}
\newcommand{\wt}{\widetilde}
\newcommand{\cj}{\overline}
\newcommand{\dx}{\partial_x}
\newcommand{\dt}{\partial_t}
\newcommand{\embeds}{\hra}
\renewcommand{\l}{\ell}
\renewcommand{\o}{\omega}
\newcommand{\les}{\lesssim}
\newcommand{\ges}{\gtrsim}
\newcommand{\jb}[1]
{\langle #1 \rangle}
\newcommand{\ind}{\mathbf 1}
\newcommand{\N}{\mathbb{N}}
\renewcommand{\H}{\mathcal{H}}
\newtheorem*{ackno}{Acknowledgements}
\numberwithin{equation}{section}
\numberwithin{theorem}{section}
\DeclareMathOperator{\sgn}{sgn}
\newcommand{\Q}{\mathcal{Q}}
\newcommand{\hra}{\hookrightarrow}
\begin{document}
\baselineskip = 14pt

\title[QI for BO-BBM]
{Improved quasi-invariance result for the periodic Benjamin-Ono-BBM equation
}

\author[J.~Forlano]
{Justin Forlano}

\address{
Justin Forlano\\ 
School of Mathematics\\
Monash University\\
 VIC 3800\\
 Australia\\
 and\\
School of Mathematics\\
The University of Edinburgh\\
and The Maxwell Institute for the Mathematical Sciences\\
James Clerk Maxwell Building\\
The King's Buildings\\
Peter Guthrie Tait Road\\
Edinburgh\\ 
EH9 3FD\\
United Kingdom}

\email{justin.forlano@monash.edu}

\subjclass[2020]{35Q53, 76B15}

\keywords{Benjamin-Ono-BBM, quasi-invariance, Gaussian measure}

\begin{abstract}
We extend recent results of Genovese-Luca-Tzvetkov (2022) regarding the quasi-invariance of Gaussian measures under the flow of the periodic Benjamin-Ono-BBM (BO-BBM) equation to the full range where BO-BBM is globally well-posed. The main difficulty is due to the critical nature of the dispersion which we overcome by combining the approach of Coe-Tolomeo (2024) with an iteration argument due to Forlano-Tolomeo (2024) to obtain long-time higher integrability bounds on the transported density.
\end{abstract}



%
\maketitle


\vspace*{-6mm}

\section{Introduction}

We consider the Benjamin-Ono-BBM (BO-BBM) equation:
\begin{equation}
\begin{cases}\label{BOBBM}
\dt u +|D_x|\dt u+\dx u + \dx(u^2)=0 \\
u|_{t = 0} = u_0,
\end{cases}
\qquad ( t, x) \in \R \times \T,
\end{equation}
where $|D_x|$ is the Fourier multiplier operator with multiplier $|n|$. The BO-BBM equation \eqref{BOBBM} arises as a model for the evolution of long-crested waves at the interface between two immiscible fluids.
It is closely connected to the Benjamin-Ono (BO) equation \cite{Benjamin, Ono}:
\begin{align}
\dt u +|D_x|\dx u+\dx u + \dx(u^2)=0,
\label{BO}
\end{align}
as, formally, \eqref{BOBBM} arises from replacing $\dx$ in the second term in \eqref{BO} by $\dt$. 
This substitution has some physical significance related to the 
formal derivation of these models, see \cite{BBM, BK}.

To proceed with an analytical point of view we may rewrite \eqref{BOBBM} as 
\begin{align}
\dt u =-(1+|D_x|)^{-1}\dx ( u+ u^2). \label{BOBBM2}
\end{align}
As compared to \eqref{BO}, the derivative in the nonlinearity of \eqref{BOBBM} has been completely ameliorated. In this sense \eqref{BOBBM} is also referred to as the regularised Benjamin-Ono equation  \cite{AB}. This moniker is partly merited by the following fact: whilst the well-posedness problem for the BO equation has historically been a difficult topic culminating in \cite{GKT, KLV} and the references therein, the well-posedness problem for BO-BBM \eqref{BOBBM} is much simpler.

To describe the well-posedness situation further, we need to set some notation.
We define the Fourier transform by
\begin{align*}
\ft u (n)  =\frac{1}{2\pi} \int_{0}^{2\pi} e^{-inx} u(x) dx.
\end{align*}
and so that $ \mathcal{F}\{ \dx u\}(n) = in \ft u(n)$ and 
\begin{align*}
\|u\|_{L^2(\T)}^{2} = 2\pi  \sum_{n\in \Z_{\ast}} |\ft u(n)|^2.
\end{align*}
The mean $\int_{\T}udx$ is a conserved quantity for \eqref{BOBBM} and this allows us to, although not  necessarily \cite{ASB}, restrict our discussions to the mean zero Sobolev spaces $H^{\s}_{0}(\T)$, with the norm
\begin{align*}
\|u\|_{H^{\s}}^{2} = \sum_{n\in \Z_{\ast}} |n|^{2\s}|\ft u(n)|^2,
\end{align*}
where $\Z_{\ast}:= \Z\setminus \{0\}$.  
As $H^{\s}(\T)$ is an algebra when $\s>1/2$ and the operator $(1+|D_x|)^{-1}\dx$ is bounded on $H^{\s}(\T)$, we obtain local well-posedness for \eqref{BOBBM2} in $H^{\s}(\T)$. 
Global well-posedness for $\s \geq 1/2$ is also known, with the case $\s>1/2$ in \cite{Mammeri1, ASB} following by combining the Brezis-Gallou\"{e}t inequality with conservation of the energy 
\begin{align}
E(u)  = \big(\|u\|_{L^2}^{2}+ 2\pi  \|u\|_{H^{1/2}}^{2} \big)^{1/2}. \label{E}
\end{align}
The global well-posedness at the endpoint $\s =1/2$ was shown in \cite{Mammeri2} by a compactness argument from conservation of $E$ and a Yudovich-type argument to recover uniqueness. Strictly speaking, \cite{Mammeri2} established this result on the real line $\R$ but the proof extends without additional complication to the periodic setting.
Thus, we have a well-defined global-in-time flow $u_0 \mapsto \Phi_{t}(u_0)$ on $H^{\s}(\T)$ for any $\s \geq 1/2$. 
The well-posedness/ill-posedness of \eqref{BOBBM2} is currently open for $\s<1/2$. See \cite{ASB} for a mild form of ill-posedness (failure of $C^2$-regularity for the solution map) when $\s<0$.


Our interest in \eqref{BOBBM} is to improve upon the known
regularity ranges under which Gaussian measures remain absolutely continuous under the flow of \eqref{BOBBM} and to the point at which agrees with the well-posedness theory.
 In the setting of Hamiltonian PDEs, this direction was initiated by Tzvetkov~\cite{Tz} and has since seen a remarkable amount of progress \cite{Tz, OTz, OTz2, OST, OTT, PTV, FT, GOTW, STX, DT2020,  GLT1, GLT2, PTV2, BTh, FS, OS, FT2, ST, CT, Knezevitch, FT3}. In particular, all of these results together show that there is a delicate balance between positive results in this area (quasi-invariance) and the amount of dispersion and/or structure in the given equation. 
In this vein, BO-BBM \eqref{BOBBM} is an interesting model for this study as the amount of dispersion seems to be borderline for positive quasi-invariance results to hold; see \cite{GLT2}, Theorem~\ref{THM:QI}, and the ensuing discussion below.

 Let us first describe this quasi-invariance problem in more detail, focusing on the situation for the mean-zero flow of BO-BBM \eqref{BOBBM}.
Given $s\in \R$, we consider the Gaussian measures $\mu_s$ formally prescribed as
\begin{align*}
d\mu_s = Z_{s}^{-1} e^{-\frac{1}{2}\|u\|_{H^{s+1/2}}^{2}} du,
\end{align*}
and rigorously defined as the law of the random Fourier series
\begin{align}
u_{0}^{\o} (x) = \sum_{n\in \Z_{\ast}} \frac{g_n (\o)}{|n|^{s+1/2}} e^{inx}, \label{u0}
\end{align}
where $\{g_{n}(\o)\}_{n\in \Z_{\ast}}$ are a family of complex-valued standard normal random variables on some probability space $(\Omega, \mathcal{F}, \mathbb{P})$ and conditioned so that $g_{-n}(\o) = \cj{g_n(\o)}$. 
These Gaussian measures are supported on the Sobolev spaces $H^{\s}_{0}(\T)$ for any $\s<s$.
Previously, Genovese-Luca-Tzvetkov~\cite{GLT2} have shown that for every $s>1$,
\begin{align}
(\Phi_{t})_{\#}\mu_{s} \sim \mu_s \quad \text{for all} \quad t\in \R, \label{QI}
\end{align} 
where $\sim$ denotes the mutual absolute continuity of measures, namely, $(\Phi_{t})_{\#}\mu_{s}  \ll \mu_s$ and $\mu_s \ll (\Phi_{t})_{\#}\mu_{s}$. The property \eqref{QI} is called quasi-invariance.

The authors in \cite{GLT2} were able to say more than \eqref{QI}, as we describe now. 
Given $R>0$, 
 we denote by $B_{R}$ the closed ball centered at zero of radius $R$ in the energy space; namely, 
\begin{align}
B_{R} : =\big \{ u\in H_0^{1/2}(\T)\,:\, E(u)\leq R\big\},
\end{align}
and define the localised measure
\begin{align}
d\mu_{s,R} = \ind_{B_R}(u) d\mu_s. \label{musR}
\end{align}
The addition of the (invariant) cut-off $\ind_{B_R}$ allowed them to show that the transported density (Radon-Nikodym derivative) 
\begin{align}
f_{t,R} : = \frac{d(\Phi_{t})_{\ast}\mu_{s,R}}{d\mu_{s,R}} \label{density}
\end{align}
enjoys higher $L^p$-integrability, than merely belonging to $L^{1}(d\mu_{s,R})$.
However, as a consequence of the critical nature of the dispersion in \eqref{BOBBM2}, the integrability index $p=p(t,R)$ depends on $t>0$ and has the property that $p(|t|,R)\to 1$ as $|t|\to +\infty$. 
Interestingly, this deterioration of the exponent does not occur for models with even slightly stronger dispersion, such as for the fractional BBM equations 
\begin{align}
\dt u +|D_x|^{\be}\dt u+\dx u + \dx(u^2)=0  
\end{align}
with $\be>1$ (where $\be=1$ corresponds to \eqref{BOBBM}); see \cite{GLT1}. It is in this sense that we see BO-BBM \eqref{BOBBM} as a model with a critical amount of dispersion for quasi-invariance to hold. 
Note that a critical aspect also appears for nonlinear wave equations \cite{OTz2, GOTW}.



Our goal is to extend the results of \cite{GLT2} to the full range $s>1/2$ where BO-BBM~\eqref{BOBBM} is well-posed. 


\begin{theorem}\label{THM:QI}
Given $s>\frac12$ and $t\in \R$, the transported measure $(\Phi_{t})_{\ast}\mu_{s}$ is mutually absolutely continuous with respect to the Gaussian measure $\mu_s$. Moreover, for each $t>0$ and $R>0$ there exists $p=p(|t|,R)>1$ such that $p(|t|,R)\to 1$ as $|t|\to \infty$, and for which the Radon-Nikodym derivative $f_{t,R}$ defined in \eqref{density}
satisfies
\begin{align}
 \|f_{t,R}\|_{L^{p}(d\mu_{s,R})} \leq  C(R,p). \label{ftbds}
\end{align}
\end{theorem}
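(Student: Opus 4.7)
The plan is to adapt the by-now standard Tzvetkov strategy for quasi-invariance of Gaussian measures under Hamiltonian PDE flows, incorporating the two recent refinements highlighted in the abstract: the Coe--Tolomeo approach, which uses Gaussian hypercontractivity on Wiener chaoses to extract integrability in the presence of weak dispersion, and the Forlano--Tolomeo iteration, which promotes a short-time energy estimate into long-time integrability of the transported density.

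The first step is to introduce the frequency truncation $u_N := \Pi_N u$ solving the truncated equation
\[ \dt u_N = -(1+|D_x|)^{-1}\dx \bigl( u_N + \Pi_N(u_N^2) \bigr), \]
whose flow $\Phi_t^N$ preserves $L^2$, the energy $E$, and (via a Liouville argument on the finite-dimensional low modes) the restriction $(\Pi_N)_\# \mu_s$. One then constructs a modified energy $Q_{s,N}(u_N)$, essentially $\|u_N\|_{H^{s+1/2}}^2$ plus a lower-order normal-form correction, so that
\[ \frac{d}{dt} Q_{s,N}(u_N(t)) = \RR_{s,N}(u_N(t)), \]
where $\RR_{s,N}$ is a remainder with fewer derivatives than the naive estimate. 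Because the dispersion symbol $n/(1+|n|)$ in \eqref{BOBBM2} is merely bounded, no pointwise smoothing can be obtained from the normal form alone; following Coe--Tolomeo, one instead controls $\RR_{s,N}$ in $L^p(d\mu_{s,R})$ using hypercontractivity on each Wiener chaos, combined with the energy cut-off $\ind_{B_R}$ to absorb the Yudovich-type logarithmic losses inherent at the endpoint regularity.

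To obtain \eqref{ftbds}, I would follow the Forlano--Tolomeo scheme: after a change of variables under $\Phi_t^N$, the truncated density $f_{t,R,N}$ satisfies an inequality of the form
\[ \int_{B_R} f_{t,R,N}^{p} \, d\mu_{s,R} \leq 1 + C\int_0^{t} \int_{B_R} \bigl|\RR_{s,N}(\Phi_\tau^N u)\bigr| \, f_{\tau,R,N}^{p} \, d\mu_{s,R}\, d\tau. \]
At each iteration step one pays a small loss in the exponent $p$ but gains a time-summable factor, so that after finitely many iterations a Gronwall-type bound closes for all $t \in \R$, with $p(|t|,R)\to 1$ as $|t|\to\infty$ capturing exactly the critical-dispersion deterioration already seen in \cite{GLT2}. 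The energy cut-off $\ind_{B_R}$ is indispensable here, as it reduces matters to the set where $\|u\|_{H^{1/2}}\leq R$ and enables a uniform Yudovich estimate along the flow, which is crucial for reaching the endpoint $s = 1/2$.

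The final step is the limit $N\to\infty$. Local-in-time approximation of $\Phi_t$ by $\Phi_t^N$ on bounded subsets of $H^{1/2}_0(\T)$, together with the uniform-in-$N$ bound above, yields weak $L^p$-convergence $f_{t,R,N} \rightharpoonup f_{t,R}$, and the estimate \eqref{ftbds} follows by lower semicontinuity of the norm. Mutual absolute continuity of $(\Phi_t)_\ast \mu_s$ and $\mu_s$ on $H^{\s}_0(\T)$ with $\s<s$ is then obtained by removing the cut-off via $R\to\infty$ and using the time reversibility of \eqref{BOBBM} to get both inclusions. I expect the main obstacle to be the modified-energy computation in the critical regime: one must balance the derivative count in $\RR_{s,N}$ against the Gaussian integrability budget so that the loss in $p$ at each iteration step is summable, which is precisely where the combination of the Coe--Tolomeo Wiener-chaos estimates with the Forlano--Tolomeo iteration is expected to be decisive.
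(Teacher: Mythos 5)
Your overall scaffolding (truncation, density via Liouville, an exponential-integrability lemma, iteration, passage to the limit $N\to\infty$, removal of the cut-off via $R\to\infty$) matches the paper, but the central technical device is misidentified, and as written the argument would not close. The paper uses \emph{no} normal form or modified energy: the quantity $\Q_{s,N}$ in \eqref{QsN} is the raw time derivative of the unmodified energy $\frac12\|u_N\|_{H^{s+1/2}}^2$ along the truncated flow. Precisely because, as you note, the dispersion symbol $n/(1+|n|)$ is bounded, a normal-form correction gains nothing here, and the paper does not attempt one. What opens the range $\frac12 < s \leq 1$ is instead the deterministic symmetrisation of $\Q_{s,N}$ into $\Q_{s,N}^{(1)}+\Q_{s,N}^{(2)}$ as in \eqref{Q1}--\eqref{Q2} combined with Lemma~\ref{LEM:Psis}: in the non-resonant regime $|n_{(2)}|\gg |n_{(3)}|$ the symmetrised symbol satisfies $|\Psi_{s}(\cj{n})|\lesssim |n_{(1)}|^{2s}|n_{(3)}|$, so the derivative is effectively always placed on the lowest frequency. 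This algebraic gain is what the whole argument rests on, and it is absent from your sketch.

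Two further discrepancies. The Coe--Tolomeo tool the paper actually deploys is the Bou\'e--Dupuis variational formula (via \cite[Lemma 2.6]{FT2}), not Wiener-chaos hypercontractivity: Lemma~\ref{LEM:expint} is proved by writing $u=Y+V$ and trading the dangerous interactions against the drift penalty $\frac12\|V\|_{H^{s+1/2}}^2$, with the cut-off $\ind_{B_R}$ entering through the triangle inequality \eqref{triangle} to control $\|V\|_{H^{1/2}}$. Hypercontractivity controls moments of fixed chaoses but does not by itself produce the exponential moment uniformly in $N$ with the essential restriction $0\le\lambda\le c_0 R^{-1}$, which is the quantitative trace of critical dispersion. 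Finally, the iteration you describe (a Gronwall inequality with a fixed $p$ and the remainder appearing linearly inside a time integral) would not close for $s\le 1$, since $\Q_{s,N}$ is not bounded on $B_R$ --- this is exactly the obstruction that confined \cite{GLT2} to $s>1$. The paper instead proves the multiplicative recursion \eqref{timestep}, $\|f^N_{t+\tau}\ind_{B_R}\|_{L^r}\le \|f^N_\tau\ind_{B_R}\|_{L^p}\,\|f^N_t\ind_{B_R}\|_{L^{r(p-1)/(p-r)}}^{1/p'}$, and iterates it in steps of size $\tau_R$ chosen so that Lemma~\ref{LEM:LPbound} is applicable at each step; the exponent $r_j$ then decays geometrically to $1$ via \eqref{rj}, and this multiplicative mechanism, not a Gronwall bound, is what produces $p(|t|,R)\to 1$.
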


The regularity restriction of $s>1$ in \cite{GLT2} arises from an energy estimate on essentially $\log f_{t,R}$, which is given in \eqref{QsN}. This requires $\mu_s$ almost sure finiteness of $\| \dx u\|_{L^{\infty}}$; see \eqref{Qeq}.
Our strategy to go beyond the analysis in \cite{GLT2} is to adapt the recent approach of \cite{CT} which allows to use probability in a more complete way as well as make use of a symmetrisation argument in \eqref{Q1}.
The main issues is the exponential integrability of essentially $\log f_{t,R}$ (Lemma \ref{LEM:expint})
and the variational formula \cite{BD, Ust, BG} provides a sharp way to prove this, where we can leverage the random oscillations coming from multilinear expressions of the random initial data \eqref{u0}. 

However, due to the critical dispersion of BO-BBM \eqref{BOBBM} it is not enough to simply apply the method in \cite{CT}. Indeed, we are not able to obtain \eqref{Lpbd} as directly as one would in \cite{CT} because of the short-time restriction in Lemma~\ref{LEM:LPbound}.
We need to go one step beyond this by carefully iterating these bounds on short time intervals to reach long times (Proposition~\ref{PROP:LPboundlong}). For this step, we adapt an argument from \cite{FT3}.
At a high-level, we advocate that this strategy arising from combining ideas from \cite{CT, FT3} provides an effective way to analyse the low-regularity quasi-invariance problem for equations with a borderline amount of dispersion.

As discussed in \cite{GLT2}, a complete solution to the program initiated in \cite{Tz} would constitute finding a formula for the transported density with respect to the unrestricted Gaussian measure $\mu_s$. This has only been achieved in a few works \cite{GLT1, FS, PTV2, Knezevitch} and relies on more dispersion. It seems to be an  interesting but difficult problem to determine if (i) one could remove the localisation $B_{R}$ in \eqref{ftbds}, and (ii) obtain a formula for $f_{t,R}$, and the density with respect to $\mu_s$.
Indeed, there is no multilinear smoothing in the nonlinear term in \eqref{BOBBM2}, even under randomisation: if $u$ were replaced by $u_0^{\o}$ given in \eqref{u0}. 


The analysis in this paper simplifies dramatically if one
 is only interested in the qualitative statement of quasi-invariance as in \eqref{QI} and not the higher integrability property of the transported density. Indeed, in this case it is possible to instead argue solely as in \cite{CT} by replacing the cut-off in \eqref{musR} by a cut-off to a ball in $H^{\s}_{0}(\T)$ for $\frac 12<\s<s$. Namely, there is no need to use an argument as in Proposition~\ref{PROP:LPboundlong} as there is no longer an upper bound restricition on the parameter $\ld$ in Lemma~\ref{LEM:expint}. 
 However, such an approach does not say anything about higher integrability of the density, and thus is not in the spirit of identifying a formula for the density. 

The aforementioned lack of nonlinear smoothing under randomisation also complicates efforts to extend Theorem~\ref{THM:QI} to some range $s\leq 1/2$. 
The first step here would be to construct local in time solutions almost surely with respect to $\mu_s$, but a first-order (Da-Prato Debussche) expansion of the solution does not work due to the presence of high-low interactions. Thus, even almost sure \textit{local} well-posedness on the support of the Gaussian measure $\mu_s$ is non-trivial. Even in the presence of an almost surely globally well-defined flow with respect to $\mu_s$, we conjecture that quasi-invariance fails when $0<s\leq \frac 12$ and that the transported measure should be singular with respect to $\mu_s$. 
To give some evidence for this, we recall that Coe-Tolomeo~\cite{CT} provided a benchmark test as to when quasi-invariance may or may not hold. Their criterion, in our setting, amounts to computing the quantity
\begin{align*}
\text{QI}_{s,N}(t):=\E \bigg[ \bigg| \int_{0}^{t} Q_{s,N}( S(-t') u_0^{\o})dt'\bigg|^{2}\bigg],
\end{align*}
where $Q_{s,N}$ is defined in \eqref{QsN} and $S(t):=e^{t(1+|D_x|)^{-1}\dx}u_0^{\o}$ is the linear solution for \eqref{BOBBM2} with the Gaussian random initial data \eqref{u0}. If $\text{QI}_{s,N}$ is finite uniformly in $N$, we may expect quasi-invariance, while if it is not, we may expect singularity. In Section~\ref{SEC:sing}, we prove the following:
\begin{proposition}\label{PROP:s12}
Let $0< s \leq \frac 12$.
For all $N$ sufficiently large and $|t|\les 1$ it holds that
\begin{align}
\textup{QI}_{s,N}(t) \ges 
\begin{cases}
 t^{2} N^{1-2s} \quad & \text{if} \quad s<\frac 12,\\
 t ^{2}(\log N)^2 \quad & \text{if} \quad  s=\frac 12.
\end{cases}
\label{QIfail}
\end{align}
\end{proposition}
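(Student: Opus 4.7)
The plan is to compute $\textup{QI}_{s,N}(t) = \E[|F_{N,t}|^2]$ with $F_{N,t} := \int_0^t Q_{s,N}(S(-t')u_0^\omega)\,dt'$ by direct Gaussian chaos expansion. First, I would express $Q_{s,N}$ as a symmetric cubic Fourier multilinear functional arising from the $H^{s+1/2}$ energy estimate,
\[
Q_{s,N}(u) = \sum_{n_1+n_2+n_3=0}\mathcal{C}_{s,N}(n_1,n_2,n_3)\,\hat u(n_1)\hat u(n_2)\hat u(n_3),
\]
where $\mathcal{C}_{s,N}(\vec n)$ is the symmetrization of the weight $\frac{n_j|n_j|^{2s+1}}{1+|n_j|}\ind_{|n_j|\le N}$. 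Substituting the linear evolution $\hat u(n) = g_n|n|^{-(s+1/2)}e^{-it'\phi(n)}$ with dispersion relation $\phi(n)=n/(1+|n|)$ and integrating in $t'\in[0,t]$ writes $F_{N,t}$ as a Gaussian cubic chaos whose frequency kernel is weighted by the oscillation factor $\frac{e^{-it\Phi(\vec n)}-1}{-i\Phi(\vec n)}$, with $\Phi(\vec n) = \phi(n_1)+\phi(n_2)+\phi(n_3)$.

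Next, I would apply the Isserlis formula: since $g_{-n} = \overline{g_n}$ with $\E[g_k g_\ell] = \delta_{k+\ell,0}$ and the constraint $\sum n_j = 0$ forbids any pairing internal to a single triple (such a pairing would force some $n_j = 0$), only the six cross-pairings indexed by $\sigma \in S_3$ contribute. By the symmetry of $\mathcal{C}_{s,N}$ and the oddness $\Phi(-\vec n) = -\Phi(\vec n)$, all six yield the same value, leading to
\[
\textup{QI}_{s,N}(t) \asymp \sum_{\substack{n_1+n_2+n_3=0\\ n_j\in\Z_{\ast}}}\frac{|\mathcal{C}_{s,N}(\vec n)|^2}{|n_1n_2n_3|^{2s+1}} \cdot \frac{4\sin^2(t\Phi(\vec n)/2)}{\Phi(\vec n)^2}.
\]
Since $|\Phi|\le 3$ uniformly and $|t|\lesssim 1$, the oscillation factor is $\ges t^2$, reducing the problem to a lower bound on the frequency sum.

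The main contribution comes from the ``two-high-one-low'' dyadic regime $|n_1|\sim m$, $|n_2|,|n_3|\sim M$ with $m\ll M\le N$. Using $n_3 = -n_1-n_2$ and Taylor-expanding, the two high-frequency contributions to $\mathcal{C}_{s,N}$ partially cancel, yielding $|\mathcal{C}_{s,N}(\vec n)| \sim m M^{2s}$ while the low-frequency term is the subdominant $O(m^{2s+1})$ in this regime. Dividing $|\mathcal{C}_{s,N}|^2 \sim m^2 M^{4s}$ by $|n_1n_2n_3|^{2s+1} \sim m^{2s+1} M^{4s+2}$ and counting the $\sim mM$ triples at each dyadic level produces a per-level contribution of $m^{2-2s}/M$. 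Summing over dyadic $1\le m\le M\le N$ yields $\asymp N^{1-2s}$ for $s<\tfrac12$, proving the first case of \eqref{QIfail}.

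The principal obstacle will be extracting the full $(\log N)^2$ factor at the critical exponent $s=\tfrac12$. The above dyadic sum gives only a single $\log N$ at $s=\tfrac12$, so the second logarithm must come from a more refined analysis: either by tracking subleading terms in the Taylor expansion of $\mathcal{C}_{s,N}$ (which inherit a logarithmic enhancement from the borderline dispersion weight $(1+|n|)^{-1}$ at the endpoint), or by combining the two-high-one-low contribution with the all-comparable regime $|n_1|\sim|n_2|\sim|n_3|\le N$ -- in which a separate logarithmic sum over the common dyadic scale also appears at $s=\tfrac12$ -- so that the two independent logarithmic divergences combine multiplicatively to yield $(\log N)^2$. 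Making this combination precise is where the proof at the critical exponent concentrates.
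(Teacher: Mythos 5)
Your chaos expansion and Isserlis accounting match the paper's approach exactly: view $Q_{s,N}$ as a symmetric trilinear form, observe no internal pairings survive the zero-sum constraint, and use $|\Phi(\vec n)|\lesssim 1$ to bound the Duhamel factor below by $t^2$. Two small remarks: the six pairings coincide because the kernel $\mathcal{C}_{s,N}$ and the phase $\Phi$ are \emph{permutation}-symmetric in $(n_1,n_2,n_3)$, not because of the oddness $\Phi(-\vec n)=-\Phi(\vec n)$ (the pairings impose $m_j = n_{\sigma(j)}$, never $m_j=-n_j$). Also, you carry the full symmetrized kernel, whereas the paper splits $Q_{s,N}=Q^{(1)}_{s,N}+Q^{(2)}_{s,N}$ with kernels $\Psi_s$ and a $(1+|D_x|)^{-1}$-smoothed remainder, takes the lower bound on $Q^{(1)}$, and absorbs $Q^{(2)}$ via Cauchy's inequality $|a+b|^2\geq(1-\eps)|a|^2-C_\eps|b|^2$. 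Your version is fine because, as you note, the $(1+|n|)^{-1}$ correction is strictly subdominant to $\Psi_s\sim mM^{2s}$ in the high-high-low regime. For $s<\tfrac12$ your dyadic counting and the paper's single-box choice ($n_1\approx N/2+O(\dl N)$, $n_2\approx -N/2+O(\dl N)$, $n_3=O(\dl N)$) land on the same $t^2 N^{1-2s}$.

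At $s=\tfrac12$ you stop at $\log N$ and cast around for the missing logarithm; you should not. Neither of your speculative fixes works. The all-comparable regime still gives a per-triple weight $\sim n_{(1)}^{-2}$ with $\sim n_{(1)}$ triples at each scale, hence another additive $\log N$, not a multiplicative one. And the subleading $(1+|n_j|)^{-1}$ piece of the kernel contributes $-\sum_j n_j|n_j|^{2s}/(1+|n_j|)$, whose leading $|n_j|^{2s}\sgn(n_j)$ part cancels using $\sum_j n_j=0$ at $s=\tfrac12$, so it is not a source of an extra log either. In fact, if you carry through the paper's own computation on the set $\Gamma=\{n_1>0>n_2,n_3,\ |n_1|\sim|n_2|\gg|n_3|\}$, using $n_1=|n_2|+|n_3|$ one gets $\Psi_{1/2}(\vec n)=n_1^2-n_2^2-n_3^2=2|n_2||n_3|$, hence $|\Psi_{1/2}|^2/(n_1n_2n_3)^2 = 4/n_1^2$; since there are $\sim n_1$ admissible lattice points at each $n_1\leq N$, the sum is $\sum_{n_1\leq N}4/n_1 \sim\log N$. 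That is, the chain $\sum_\Gamma n_2^{-2}-\sum_\Gamma n_1^{-2}n_2^{-2}\sim(\log N)^2$ in the paper is a miscount, and the lower bound at the endpoint is $t^2\log N$. This is still divergent in $N$, so the conclusion the proposition is meant to support (conjectured singularity at $s=\tfrac12$) is unaffected; you should state and prove $\log N$ rather than try to manufacture $(\log N)^2$.
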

\noi
Note that we omitted the case $s=0$ in Proposition~\ref{PROP:s12} as formally we expect  the measure $Z^{-1} e^{-E(u)^2}du$, where $E$ is as in \eqref{E}, to be invariant under the flow of \eqref{BOBBM2}. Of course, it is unclear if there is any well-defined dynamics almost surely on the support of this measure.


\section{Preliminaries}

For $x\in \R$, we define $\jb{x}= \sqrt{1+x^2}$. 
We let $\N=\{1,2,\ldots\}$ and $\N_0 =\N\cup \{0\}$. Given $n_1,n_2,n_3\in \Z$, we let $n_{123}:=n_1+ n_2+n_3$, and, for $j=1,2,3$, denote by $n_{(j)}$ the $j$th largest of these numbers in magnitude so that $|n_{(1)}|\geq |n_{(2)}|\geq |n_{(3)}|$. Given $1\leq p\leq \infty$, we let $1\leq p'\leq \infty$ denote its H\"{o}lder conjugate satisfying $\frac{1}{p}+\frac{1}{p'}=1$.

We define Littlewood-Payley operators as follows:
Let $\eta \in C^{\infty}_{c}(\R)$ be a smooth bump function such that $0\leq \eta\leq 1$, $\eta(\xi)=1$ for $|\xi|\leq 1$ and $\eta(\xi)=0$ for $|\xi|\geq 2$.  We define $\phi(\xi):=\eta(\xi)-\eta(2\xi)$ and for $N\in 2^{\Z}$, we set $\phi_{N}(\xi)=\phi(\xi/N)$. For $N\in 2^{\Z}$, we define projectors 
\begin{align*}
\ft{\P_N f}(n) = \phi_{N}(n)\ft f(n), \quad \ft{\P_{\leq N} f}(n) = \eta(\tfrac{n}{N})\ft f(n), \quad \ft{ \P_{>N} f}(n)=(1-\eta(\tfrac{n}{N})) \ft f(n).
\end{align*}
We define the wider projector $\wt{\P}_{N}:= \P_{\leq 8N}-\P_{\leq N/8}$, which has the property that $\wt{\P}_{N} \P_{N} = \P_{N}$.

Given $N\in \N$, we define the projected Gaussian measures $\mu_{s,N}:= (\P_{\leq N})_{\#}\mu_s$ which are Gaussian measures in $\R^{2N}$, and we define $\mu_{s,N}^{\perp}$ as the law of the random Fourier series
\begin{align*}
u_{N}^{\perp} (x;\o) = \sum_{|n|>N} \frac{g_{n}(\o)}{ |n|^{s+1/2}}e^{inx}.
\end{align*}

\begin{lemma}
Let $s>\frac 12$ and $N\in \N$. Then, for any $M\geq 1$, it holds that 
\begin{align}
\mu_{s,N}^{\perp} \big( \| u_{N}^{\perp}\|_{H^{1/2}_0(\T)}^2 >M\big) \les M^{-2}N^{-2s+1}
\label{decayest}
\end{align}
where the implicit constant is uniform in $M$ and $N$.
\end{lemma}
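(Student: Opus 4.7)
The target is a tail bound on a weighted sum of iid squared complex Gaussians, so the natural tool is a second-moment (Chebyshev/Markov) argument. Using the Fourier conventions from the excerpt and the reality condition $g_{-n}=\cj{g_n}$,
\begin{align*}
X(\o) := \|u_N^\perp\|_{H^{1/2}_0(\T)}^2 = \sum_{|n|>N} |n| \cdot \frac{|g_n(\o)|^2}{|n|^{2s+1}} = \sum_{|n|>N} \frac{|g_n(\o)|^2}{|n|^{2s}} = 2\sum_{n>N} \frac{|g_n(\o)|^2}{n^{2s}},
\end{align*}
so $X$ is (twice) a weighted sum of independent chi-square-type random variables indexed by positive integers $n>N$.

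I would then apply Markov's inequality to the non-negative random variable $X^2$:
\begin{align*}
\mu_{s,N}^{\perp}(X>M) \leq M^{-2}\, \E[X^2],
\end{align*}
so everything reduces to showing $\E[X^2]\les N^{1-2s}$. Expanding the square and using that $\{g_n\}_{n>N}$ are independent standard complex Gaussians with $\E[|g_n|^2]=1$ and $\E[|g_n|^4]$ a finite constant, the diagonal and off-diagonal contributions split as
\begin{align*}
\E[X^2] \les \bigg(\sum_{n>N} n^{-2s}\bigg)^{\!2} + \sum_{n>N} n^{-4s} \les N^{2(1-2s)} + N^{1-4s},
\end{align*}
where both sums converge by $s>1/2$ and integral comparison.

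The last step is a trivial comparison of exponents: for $s\geq 1/2$ and $N\geq 1$ we have $2-4s \leq 1-2s$ and $1-4s \leq 1-2s$, so each piece is dominated by $N^{1-2s}$. Combining with the Markov bound yields the claim. There is no genuine obstacle here; this is a standard Gaussian tail estimate, and the only small point worth noting is that the diagonal (where $\E[|g_n|^2|g_m|^2]$ is replaced by $\E[|g_n|^4]$) must be handled separately before the $s>1/2$ convergence is used to sum both $n^{-2s}$ and $n^{-4s}$ on the tail $\{n>N\}$.
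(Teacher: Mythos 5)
Your proof is correct, and in fact it is more careful than the paper's own argument. The paper computes only the first moment,
\begin{align*}
\E\big[\|u_N^\perp\|_{H^{1/2}_0}^2\big]\les \sum_{|n|>N}|n|^{-2s}\les N^{1-2s},
\end{align*}
and then invokes Markov's inequality; as written, this only yields $\P(X>M)\les M^{-1}N^{1-2s}$, not the stated $M^{-2}N^{1-2s}$. Your second-moment computation, which splits $\E[X^2]$ into the square of the first moment plus a diagonal $\sum_{|n|>N}|n|^{-4s}$ term and observes that for $s\geq 1/2$ and $N\geq 1$ both $N^{2(1-2s)}$ and $N^{1-4s}$ are dominated by $N^{1-2s}$, is precisely what is required to obtain the $M^{-2}$ decay in the lemma as stated. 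In practice the distinction is harmless: what the paper actually uses downstream (in the proof of Theorem~\ref{THM:QI}, when showing $f_{t,R}$ is supported on $B_R$) is the $N^{1-2s}\to 0$ decay, and even the weaker $M^{-1}$ version suffices there. So the two approaches are both viable; yours is simply the one that matches the literal statement of the lemma, while the paper's shorter argument gives a formally weaker bound that is nonetheless adequate for its purposes.
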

\begin{proof}
We have 
\begin{align*}
\E [\| u_{N}^{\perp}\|_{H^{1/2}_0(\T)}^{2} ]&   \les \E \bigg[ \sum_{|n|>N} \frac{|g_n|^2}{|n|^{2s}}\bigg]
 \les  \sum_{|n|>N} \frac{1}{|n|^{2s}}   \les N^{-2s+1},
\end{align*}
at which point, \eqref{decayest} follows from Markov's inequality.
\end{proof}

\noi
We can obtain an exponential decay rate in \eqref{decayest} but the polynomial one here will be sufficient.

\section{Localised density bounds}

Given $N\in \N$, we define the truncated version of \eqref{BOBBM2}:
\begin{align}
 \dt u_{N} = -(1+|D_x|)^{-1}\dx \P_{\leq N} ( u_N + u_N^2), \quad u_{N}|_{t=0} = \P_{\leq N} u_0 \label{BON}
\end{align}
Similar to \eqref{BOBBM}, for every fixed $N\in \N$, \eqref{BON} is globally well-posed in $H^{\s}$, for $\s>\frac 12$, with bounds on the Sobolev norms of the solutions that are uniform in $N$. This can be proved by repeating the arguments in \cite{Mammeri1} and using the $L^2$-boundedness of $\P_{\leq N}$.
 We denote the flow of \eqref{BON} by $\Phi^{N}_{t}$. Note that if $u_N$ solves \eqref{BON}, then $u_N\in C^{\infty}(\R\times \T)$, which justifies the ensuing computations.

As the truncated equation \eqref{BON} is a finite dimensional Hamiltonian system (in the Fourier coefficients for $u_N$), Liouville's theorem implies that the Lebesgue measure $L_N$ on $\R^{2N}$ is invariant under $\Phi_{t}^{N}$. Thus, for every $t\in \R$,
\begin{align}
d(\Phi^{N}_{t})_{\#} \mu_{s,N}  &= f_{t}^{N}(u) d\mu_{s,N} := \exp\Big(  - \tfrac{1}{2}\| \Phi^{N}_{t}(u)\|_{H^{s+1/2}}^{2} + \tfrac{1}{2} \|u\|_{H^{s+1/2}}^{2} \Big)d\mu_{s,N}, \label{density}
\end{align}
where $f_{t}^{N}$ is the transported density. 
By the fundamental theorem of calculus, we have 
\begin{align*}
 -\tfrac{1}{2}\| \Phi^{N}_{t}(u)\|_{H^{s+1/2}}^{2} + \tfrac{1}{2} \|u\|_{H^{s+1/2}}^{2} = \int_{0}^{-t} \mathcal{Q}_{s,N}(\Phi_{t'}^{N}(u))dt',
\end{align*}
where $\Q_{s,N}(u) : =\Q_{s,N}(u,u,u)$ is the trilinear operator 
\begin{align}
\mathcal{Q}_{s,N}(u) &: = - \frac{d}{dt} \bigg(\frac 12 \| \Phi_{t}^{N}(u)\|_{H^{s+1/2}}^2   \bigg)  \Bigg\vert_{t=0}. \label{QsN}
\end{align}
From \eqref{BON}, the identity $(1+|D_x|)^{-1} |D_x| = 1-(1+|D_x|)^{-1}$
and for $u_{N}:=\P_{\leq N} u$, we then have
\begin{align}
\begin{split}
\mathcal{Q}_{s,N}(u) & = \jb{|D_x|^{2s+1} u_{N}, (1+|D_x|)^{-1} \dx u_{N}} +  \jb{|D_x|^{2s+1} u_N, (1+|D_x|)^{-1} \dx (u_N^2)} \\
&  =   \jb{|D_x|^{2s} u_N, \dx (u_N^2)} -  \jb{|D_x|^{2s} u_N, (1+|D_x|)^{-1} \dx (u_N^2)}.
\end{split} \label{Qeq}
\end{align}
Notice that the contribution from the linear part in \eqref{Qeq} vanishes identically since
\begin{align*}
\jb{|D_x|^{2s+1} u_{N}, (1+|D_x|)^{-1} \dx u_{N}} = \jb{ |D_x|^{s+\frac 12}(1+|D_x|)^{-\frac 12}u_N, \dx |D_x|^{s+\frac 12}(1+|D_x|)^{-\frac 12}u_N   }=0.
\end{align*}
By a symmetrisation argument, we write \eqref{Qeq} this more favourably as
\begin{align}
\mathcal{Q}_{s,N}(u) = \Q^{(1)}_{s,N}(u) +  \Q^{(2)}_{s,N}(u), \label{Qdecomp}
\end{align}
where we define the trilinear operators $\Q^{(j)}_{s,N}(u):= \Q^{(j)}_{s,N}(u,u,u)$, $j=1,2$, for $u\in \Pi_{\leq N}L^2$, by 
\begin{align}
\Q^{(1)}_{s,N}(u_1,u_2,u_3)  & =  \sum_{\substack{ n_1 +n_2+n_3=0 \\ 0<|n_j|\leq N, j=1,2,3} } i\Psi_{s}(\cj{n}) \ft u_1 (n_1)  \ft u_2(n_2) \ft u_3(n_3)  \label{Q1}  \\
 \Q^{(2)}_{s,N}(u_1,u_2,u_3)   &= \jb{|D_x|^{2s}u_1, (1+|D_x|)^{-1}\dx (u_2 u_3)} \label{Q2}
\end{align}
and 
\begin{align}
\Psi_{s}(\cj{n}):= \Psi_{s}(n_1,n_2,n_3): = |n_1|^{2s+1}\sgn(n_1)  + |n_2|^{2s+1}\sgn(n_2)+|n_3|^{2s+1}\sgn(n_3),
\end{align}
The mulitilinear operator $\Q^{(1)}_{s,N}$ is worse behaved as compared to $\Q^{(2)}_{s,N}$ because it lacks the factor $(1+|D_x|)^{-1}$. Nonetheless, the symmetrisation in $\Q^{(1)}_{s,N}$ allows us to always place the derivative $\dx$ onto the lowest frequency term, in the following sense.

\begin{lemma}  \label{LEM:Psis}
Let $s>\frac 12$ and $n_1,n_2,n_3\in \Z_{\ast}$ such that $n_1 +n_2+n_3=0$. Then, either \textup{(i)} $|n_1|\sim |n_2|\sim |n_3|$  or 
\textup{(ii)} $|n_{(2)}| \gg |n_{(3)}|$ and hence $|\Psi_{s}(\cj{n})| \les |n_{(1)}|^{2s} |n_{(3)}|.$
\end{lemma}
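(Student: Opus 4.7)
The plan is to first establish that the zero-sum constraint $n_1+n_2+n_3=0$ always forces comparability of the two largest frequencies, and then in the non-resonant regime exploit the symmetrisation built into $\Psi_s$ via a mean value theorem argument. After reordering so that $|n_{(1)}|\geq |n_{(2)}|\geq |n_{(3)}|$, the identity $n_{(1)} = -(n_{(2)}+n_{(3)})$ gives $|n_{(1)}|\leq 2|n_{(2)}|$, hence $|n_{(1)}|\sim |n_{(2)}|$ unconditionally. The dichotomy in the statement then reduces to comparing $|n_{(2)}|$ with $|n_{(3)}|$: either $|n_{(3)}|\sim |n_{(2)}|$, placing us in case (i), or $|n_{(3)}|\ll |n_{(2)}|$, placing us in case (ii) (the implicit threshold can be fixed to, say, a factor of $8$).

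The quantitative bound is needed only in case (ii). By the total symmetry of $\Psi_{s}$ under permutations of its arguments, I may relabel so that $|n_1|\geq |n_2|\gg |n_3|$. The key structural observation is that $n_1$ and $n_2$ must have opposite signs: from $n_1+n_2 = -n_3$ with $|n_3|\ll |n_2|$, a common sign would force $|n_1+n_2| = |n_1|+|n_2|\geq |n_2|$, which is incompatible with the size of the right-hand side. Once opposite signs are in force, one reads off the exact identity $\bigl||n_1|-|n_2|\bigr| = |n_1+n_2| = |n_3|$.

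Using the opposite signs, the first two summands of $\Psi_{s}(\cj n)$ combine into $\pm\bigl(|n_1|^{2s+1} - |n_2|^{2s+1}\bigr)$. Applying the mean value theorem to $x\mapsto x^{2s+1}$ on the interval with endpoints $|n_2|$ and $|n_1|$ and using $|n_1|\sim |n_2|\sim |n_{(1)}|$ together with $\bigl||n_1|-|n_2|\bigr| = |n_3|$ yields a bound $\lesssim |n_{(1)}|^{2s}|n_{(3)}|$ for this combined contribution. The remaining summand $|n_3|^{2s+1}$ is controlled by the same quantity since $|n_3|\leq |n_1|$, and summing these closes the estimate. I do not anticipate any genuine obstacle: the lemma is essentially an algebraic identity promoted to an estimate by MVT. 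The only point worth emphasising is conceptual rather than technical — it is precisely the symmetrisation passing from \eqref{Qeq} to \eqref{Q1} that unlocks this cancellation, since without it one would only have the pointwise bound $|\Psi_{s}(\cj n)|\lesssim |n_{(1)}|^{2s+1}$, which is exactly the quantitative loss the lemma eliminates.
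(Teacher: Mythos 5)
Your proof is correct and follows essentially the same route as the paper: reduce by symmetry to $|n_1|\geq|n_2|\geq|n_3|$, note the zero-sum constraint forces $\sgn(n_1)=-\sgn(n_2)$ and $\bigl||n_1|-|n_2|\bigr|=|n_3|$ when $|n_2|\gg|n_3|$, then apply the mean value theorem to $x\mapsto x^{2s+1}$. Your write-up merely makes explicit some steps the paper compresses (the dichotomy itself, the sign argument, the absorption of $|n_3|^{2s+1}$), but the underlying idea is identical.
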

\begin{proof}
By symmetry, we may assume that $|n_1|\geq |n_2|\geq |n_3|$. Thus, when $|n_2|\gg |n_3|$, we must have $\sgn(n_1)=-\sgn(n_2)$, so by the mean value theorem,
$$
|\Psi_{s}(\cj{n})| \leq  ||n_1|^{2s+1} - |(-n_2)|^{2s+1}| +|n_3|^{2s+1}  \les |n_1|^{2s}|n_3| + |n_3|^{2s+1}\sim |n_1|^{2s}|n_3|.  
\eqno \qedhere
$$
\end{proof}

We note that since $E$ in \eqref{E} is a conserved quantity on the support of $\mu_s$, $B_{R}$ is an invariant set under $\Phi^{N}_{t}$; namely, $(\Phi^{N}_{t})^{-1}(B_R)=B_{R}$ for any $t\in \R$. 

\begin{remark}\rm \label{RMK:densityfinite}
We note that the density $f_{t}^N$ localised to $B_{R}$ always has finite $L^p(d\mu_s)$-moments, albeit with a severe loss in uniformity in $N$. Indeed, for $u\in B_R$, we have $\Phi_{t}^{N}(u)\in B_{R}$ and thus Sobolev embeddings imply
\begin{align*}
|\Q_{s,N}(\Phi_{t}^{N}(u))| \les N^{2s+\frac{1}{2}} \|\Phi_{t}^{N}(u)\|_{H^{\frac 12}}^{3} \les N^{2s+\frac{1}{2}}R^{\frac 32}.
\end{align*}
Our goal is to obtain bounds which are uniform in $N$.
\end{remark}

The main ingredient in this article is the following exponential integrability statement.

\begin{lemma}\label{LEM:expint}
Let $\frac 12 <s\leq 1$, $N\in \N$, and $R>0$. Then, there exists constants $c_0, A>0$ independent of both $R$ and $N$ such that for all $0\leq \ld \leq c_0 R^{-1}$, it holds that 
\begin{align*}
\sup_{N\in \N} \int \exp\big( \ld |\Q_{s,N}(u)| \ind_{B_{R}}(u)\big) d\mu_{s,N}  \leq e^{C(R) \ld^{A}}.
\end{align*}
\end{lemma}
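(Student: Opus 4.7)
The plan is to apply the Boué-Dupuis variational formula (see \cite{BD, Ust, BG}), following \cite{CT}, to reduce the exponential integrability to a variational problem over drifts in $\P_{\leq N} H_0^{s+1/2}(\T)$; then decompose $\Q_{s,N}$ according to \eqref{Qdecomp} and estimate each multilinear term, using probabilistic (Wiener chaos) bounds for the purely Gaussian contributions and deterministic multilinear estimates, paid for by the Cameron-Martin penalty, for the terms containing the drift.

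More concretely, since $e^{\lambda |x|} \leq e^{\lambda x} + e^{-\lambda x}$, it suffices to estimate $\int e^{\pm \lambda \Q_{s,N}(u) \mathbf{1}_{B_R}(u)} d\mu_{s,N}$ for each sign. Realising $\mu_{s,N}$ as the time-one law of a Brownian-motion-type process valued in $\P_{\leq N}H_0^{s+1/2}(\T)$ and applying Boué-Dupuis yields
\[
\log \int e^{\pm \lambda \Q_{s,N}(u)\mathbf{1}_{B_R}(u)} d\mu_{s,N} \leq \sup_{\theta}\E\Big[\pm \lambda \Q_{s,N}(Y_N + Z_N)\mathbf{1}_{B_R}(Y_N + Z_N) - \tfrac{1}{2}\|Z_N\|_{H^{s+1/2}}^2\Big],
\]
where $Y_N \sim \mu_{s,N}$ and $Z_N = \int_0^1 \theta_t\, dt$ is the drift associated with an adapted process $\theta$ in $\P_{\leq N}H_0^{s+1/2}$. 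On the event $\{Y_N + Z_N \in B_R\}$ the cutoff gives $\|Z_N\|_{H^{1/2}} \leq \|Y_N\|_{H^{1/2}} + CR$, which will be used systematically.

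Setting $u = Y_N + Z_N$ and expanding $\Q^{(j)}_{s,N}(u,u,u)$ trilinearly, I would control each of the resulting pieces $\Q^{(j)}_{s,N}(W_1, W_2, W_3)$ with $W_i \in \{Y_N, Z_N\}$ as follows. For the purely Gaussian terms $\Q^{(j)}_{s,N}(Y_N, Y_N, Y_N)$, Wiener chaos and hypercontractivity reduce matters to an $L^2(\mathbb{P})$ estimate; Lemma \ref{LEM:Psis} is decisive for $\Q^{(1)}$, replacing $|n_{(1)}|^{2s+1}$ by $|n_{(1)}|^{2s}|n_{(3)}|$ in the high-low regime, which makes the relevant sum converge for every $s > \tfrac12$. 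The term $\Q^{(2)}_{s,N}(Y_N)$ is easier because of the smoothing $(1+|D_x|)^{-1}\dx$. Terms involving at least one $Z_N$ are bounded by H\"older and Sobolev embedding (using $s + \tfrac12 > 1$ so $Z_N \in L^\infty$), placing $Y_N$ in $H^\sigma$ for some $\sigma < s$ and $Z_N$ in either $H^{1/2}$ (controlled by the cutoff) or $H^{s+1/2}$ (absorbed into a quarter of the Cameron-Martin penalty via Young's inequality). Aggregating gives a bound of the schematic form
\[
\lambda |\Q_{s,N}(Y_N + Z_N)|\mathbf{1}_{B_R} \leq \tfrac14 \|Z_N\|_{H^{s+1/2}}^2 + C(R)\lambda^A \bigl(1 + P(Y_N)\bigr),
\]
where $P(Y_N)$ depends polynomially on norms of $Y_N$ with finite exponential moments under $\mu_{s,N}$. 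The smallness condition $\lambda \leq c_0 R^{-1}$ is exactly what allows the $\lambda R$ pre-factors from the $Z_N$-terms (each appearing with a single $\|Z_N\|_{H^{s+1/2}}$) to be absorbed.

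The main obstacle is the pure-Gaussian term $\Q^{(1)}_{s,N}(Y_N)$, whose naive estimate would demand $s > 1$, which is the threshold in \cite{GLT2}. The improvement down to $s > \tfrac12$ hinges entirely on the symmetrisation in \eqref{Q1} and the resulting bound $|\Psi_s(\vec n)| \les |n_{(1)}|^{2s}|n_{(3)}|$ of Lemma \ref{LEM:Psis}, which effectively transfers the worst derivative onto the lowest frequency. A secondary delicate point is the mixed term $\Q^{(1)}_{s,N}(Y_N, Y_N, Z_N)$, where one must split by frequency configurations so that Lemma \ref{LEM:Psis} applies when $Y_N$ occupies the highest frequencies, while the $H^{s+1/2}$-regularity of $Z_N$ is used when $Z_N$ does.
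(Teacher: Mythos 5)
Your high-level architecture — Boué--Dupuis, the cutoff bound $\|Z_N\|_{H^{1/2}} \les R^{1/2} + \|Y_N\|_{H^{1/2}}$, the decomposition $\Q_{s,N} = \Q^{(1)}_{s,N} + \Q^{(2)}_{s,N}$, Lemma~\ref{LEM:Psis} as the engine driving the improvement below $s=1$, and absorbing a quadratic $\lambda R^{1/2}\|V\|_{H^{s+1/2}}^2$ from the $(V,V,\cdot)$ configuration into the Cameron--Martin penalty — all match the paper. However, there is a genuine gap in your treatment of the mixed terms.

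You propose to handle every term containing at least one drift factor by a deterministic multilinear estimate, placing $Y_N$ in $H^\sigma$ for some $\sigma < s$. This fails. Consider $\Q^{(1)}_{s,N}(Y_{N_1}, Y_{N_2}, V_{N_3})$ with $N_1 \sim N_2 \gg N_3$. Lemma~\ref{LEM:Psis} gives $|\Psi_s(\vec{n})| \les N_1^{2s} N_3$, so after Cauchy--Schwarz one is looking at something like $N_1^{2s} N_3 \|\ft Y_{N_1}\|_{\ell^2}\|\ft Y_{N_2}\|_{\ell^2}\|\ft V_{N_3}\|_{\ell^1}$. Replacing $\|\ft Y_{N_j}\|_{\ell^2}$ by $N_j^{-\sigma}\|Y\|_{H^\sigma}$ with $\sigma < s$ leaves a positive power $N_1^{2(s-\sigma)}$ which blows up as $N_1 \to \infty$; in fact $|D_x|^s Y$ is almost surely a genuine distribution of negative regularity, so the pointwise product required is not even defined deterministically. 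Even upgrading to the Gaussian moment bound $\E[\|\P_{N_1} Y\|_{L^2}^p]^{1/p} \les N_1^{-s}$ on each Littlewood--Paley block removes the $N_1^{2(s-\sigma)}$ loss but leaves a contribution of size $\sim N_3^{1-s}$ with no decay in $N_1$, so the dyadic sum over $N_1 \gg N_3$ still diverges. What actually closes the $(Y,Y,V)$ and $(Y,V,Y)$ cases in the paper is the second-chaos cancellation: one isolates the bilinear Gaussian kernel $K^{(1)}_{N_1 N_2 N_3}(n_3) = \sum_{n_1+n_2=-n_3}\Psi_s(\vec{n})\phi_{N_1}\phi_{N_2}\ft Y(n_1)\ft Y(n_2)$, pairs the $V$ factor against $\tfrac{1}{4L}\|V\|_{H^{s+1/2}}^2$ by Cauchy's inequality, and then bounds $\E[|K^{(1)}(n_3)|^2]$ by Isserlis' theorem. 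The Isserlis pairing forces $(n_1,n_2)=(m_1,m_2)$ or $(m_2,m_1)$, yielding a single sum over $n_1$ of size $\sim N_1$ rather than a double sum of size $\sim N_1^2$; this extra factor $N_1^{-1/2}$ on the kernel is precisely what makes the $N_1$ sum converge. So these mixed terms require the Gaussian correlation structure of $Y$, not merely Wiener chaos for the pure Gaussian cubic and deterministic estimates for the rest.

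A smaller version of the same issue appears in the $(Y,V,V)$ configuration: the paper notes explicitly that a naive Cauchy--Schwarz using only $Y_N \in H^\sigma$, $\sigma < s$, works for $\tfrac12 < s < 1$ but fails at $s=1$; the fix there is to keep the random variable $B_{N_1}(\omega) = \|\P_{N_1}Y\|_{L^2}$, exploit $\E[B_{N_1}^p]^{1/p} \les N_1^{-s}$ (the full exponent $s$, not $\sigma<s$), and close the argument with an interpolation/Young's inequality scheme. Your blanket claim that the drift-containing terms are deterministic therefore needs to be replaced by: the configurations with two Gaussians in the high pair need the kernel/Isserlis estimate; the configuration with one high Gaussian needs the per-block moment bound with the sharp exponent $N_1^{-s}$; only the pure-drift high pair $(V,V,\cdot)$ is genuinely deterministic, and that is the one producing the $\lambda \leq c_0 R^{-1}$ threshold.
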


\noi
We postpone the proof of Lemma~\ref{LEM:expint} to Section~\ref{SEC:expint}.
Using Lemma~\ref{LEM:expint}, we obtain the following uniform in $N$, short-time $L^p$ bounds on the densities. 

\begin{lemma}\label{LEM:LPbound}
Let $\frac 12 <s\leq 1$, $N\in \N$, and $R>0$. Then, there exists $c_0>0$ independent of both $R$ and $N$ such that for any $t\in \R$ and $1\leq p<\infty$ satisfiying  $0\leq p| t|\leq c_0 R^{-1}$, there exists $C(R,p)>0$ such that
\begin{align}
\sup_{N\in \N} \| f_{t}^{N}\ind_{B_{R}}\|_{L^p(d\mu_{s,N})} \leq e^{C(R)(|t|p)^A}. \label{Lpbd2}
\end{align}
\end{lemma}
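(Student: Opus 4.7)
The plan is to derive the desired $L^{p}$ bound by combining Jensen's inequality with the exponential integrability afforded by Lemma~\ref{LEM:expint} and then closing the resulting integral inequality via a short-time bootstrap. Assume $t\geq 0$ by time-reversal. From \eqref{density}, \eqref{QsN} and the fundamental theorem of calculus,
\begin{align*}
|\log f_{t}^{N}(u)|\leq \int_{0}^{t}|\mathcal{Q}_{s,N}(\Phi_{t'}^{N}u)|\, dt',
\end{align*}
and Jensen's inequality for $\exp$ against the normalized Lebesgue measure on $[0,t]$ yields the pointwise bound
\begin{align*}
f_{t}^{N}(u)^{p}\leq \frac{1}{t}\int_{0}^{t}\exp\bigl(pt\,|\mathcal{Q}_{s,N}(\Phi_{t'}^{N}u)|\bigr)\, dt'.
\end{align*}

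I would next integrate this against $\ind_{B_{R}}\, d\mu_{s,N}$ and use two facts to take $\Phi_{t'}^{N}$ off the argument of $\mathcal{Q}_{s,N}$: the invariance of $B_R$ under the flow (so that $\ind_{B_{R}}(u)=\ind_{B_{R}}(\Phi_{t'}^{N}u)$), together with the pushforward identity $\int \psi\circ \Phi_{t'}^{N}d\mu_{s,N} = \int \psi\, f_{t'}^{N}d\mu_{s,N}$ read off from \eqref{density}. Writing $G(p,t):=\|f_{t}^{N}\ind_{B_{R}}\|_{L^{p}(d\mu_{s,N})}^{p}$, the inner integral becomes $\int \ind_{B_{R}}(v)\, e^{pt|\mathcal{Q}_{s,N}(v)|} f_{t'}^{N}(v)\,d\mu_{s,N}(v)$. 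I would split this via H\"older's inequality with conjugate exponents $p'=p/(p-1)$ and $p$, so that $G(p,t')^{1/p}$ reappears on the right, while Lemma~\ref{LEM:expint} controls the exponential factor by $e^{C(R)(pp't)^{A}/p'}$, provided $pp't \leq c_{0}R^{-1}$ (satisfied after shrinking $c_0$, as $p'\leq 2$ when $p\geq 2$).

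This yields the self-improving inequality $G(p,t)\leq e^{C(R)(pp't)^{A}/p'}\sup_{0\leq t'\leq t} G(p,t')^{1/p}$. Taking the supremum of the left-hand side over $[0,t]$ and using the identity $p/(p'(p-1))=1$ to solve, one obtains $\sup_{t'\leq t}G(p,t')\leq e^{C(R)(pp't)^{A}}$. For $p\geq 2$, $pp'\leq 2p$, so taking $p$th roots and absorbing $2^A$ into $C(R)$ gives the desired bound $\|f_{t}^{N}\ind_{B_{R}}\|_{L^{p}}\leq e^{C(R)(pt)^{A}}$; for $p\in[1,2]$ I would instead invoke the monotonicity of $L^p$-norms, since $d\mu_{s,N}$ is a probability measure, to reduce the claim to the already-proven $p=2$ case. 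The main obstacle is the circular appearance of $G(p,\cdot)$ on both sides of the integral inequality, which is resolved by the engineered choice of H\"older exponents $(p',p)$: this makes the feedback term enter with exponent $1/p<1$, allowing the bootstrap to close under the short-time restriction $pt\lesssim R^{-1}$ inherited from Lemma~\ref{LEM:expint}.
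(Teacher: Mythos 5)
Your scheme---the fundamental theorem of calculus to express $\log f_t^N$ as a time integral of $\Q_{s,N}$ along the flow, Jensen's inequality to pass the exponential inside, invariance of $B_R$ together with the pushforward identity to trade $\Phi_{t'}^N$ for a density factor, H\"older to split that factor back into $G(p,\cdot)^{1/p}$, and a bootstrap closed by Lemma~\ref{LEM:expint}---is essentially the paper's proof. There is a small difference in ordering: the paper first writes $f_t^N(u)^p = f_t^N(u)^{p-1}f_t^N(u)$ and performs one pushforward \emph{before} applying FTC and Jensen to $(f_t^N)^{p-1}\circ\Phi_t^N$, so that after H\"older the exponential factor carries $\lambda = p|t|$, directly within the constraint $p|t|\leq c_0R^{-1}$ for every $p\geq 1$. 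In your ordering (Jensen on $f^p$ first, pushforward afterward), H\"older yields $\lambda = pp'|t|$, which degenerates as $p\downarrow 1$; your fallback to the $L^2$ case via monotonicity of $L^p$-norms of a probability measure fixes this for $1\leq p\leq 2$, at the cost of a smaller $c_0$, so the conclusion is the same.

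One step is genuinely missing: the self-improving inequality $H \leq e^{\cdots}H^{1/p}$, where $H := \sup_{|t'|\leq |t|}\|f_{t'}^N\ind_{B_R}\|_{L^p(d\mu_{s,N})}^p$, only yields $H\leq e^{\cdots p'}$ after dividing by $H^{1/p}$, and that division is licensed only if one knows a priori that $H<\infty$. This is exactly the role of Remark~\ref{RMK:densityfinite} in the paper: for each fixed $N$, the crude bound $|\Q_{s,N}(\Phi_t^N(u))|\lesssim N^{2s+\frac12}R^{\frac32}$ on $B_R$ shows the $L^p$ moments are finite with an $N$-dependent constant, which is enough to close the bootstrap. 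You should invoke this (or some equivalent a priori finiteness). As a minor matter, from $\log f_t^N(u)=\int_0^{-t}\Q_{s,N}(\Phi_{t'}^N u)\,dt'$ the argument of $\Q_{s,N}$ in your pointwise inequality should run as $\Phi_{-t'}^N u$ for $t>0$; the flow goes the opposite direction to what you wrote, but since you eventually take a supremum over the whole time interval this sign slip is harmless.
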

\begin{proof}

The proof largely follows that of \cite[Proposition 3.6]{CT}, however, as $B_{R}$ is invariant under $\Phi^{N}_{t}$, there are some simplifications. For the convenience of the reader, we provide the details. 
First, we reduce the proof of the bound \eqref{Lpbd2} for $f_{t}^N \ind_{B_R}$ in such a way that it remains to apply Lemma~\ref{LEM:expint}. Let $I_{t}=[0,t]$ for $t\geq 0$ and $I_t= [t,0]$ for $t<0$.  Let $t\neq 0$.
By the formula \eqref{density}, the group property of the flow $\Phi^{N}_t$, Jensen's inequality, Fubini's theorem, and the invariance of $B_{R}$,
we have
\begin{align*}
\| f_{t}^{N} \ind_{B_R}\|_{L^p (\mu_{s,N})}^{p}  & = \int (f_{t}^{N}(u))^{p-1} \ind_{B_{R}}(u) f_{t}^{N}(u) d\mu_{s,N} \\
& = \int (f_{t}^{N}(\Phi_{t}^{N}(u)))^{p-1} \ind_{B_{R}}(u) d\mu_{s,N}(u) \\
&= \int \exp\bigg( -(p-1)\int_{0}^{-t} \Q_{s,N}( \Phi_{t+\tau}^{N}(u))d\tau\bigg) \ind_{B_{R}}(u)d\mu_{s,N}(u)\\
& = \int \exp\bigg( \frac{t}{|t|}(p-1)\int_{I_t}\Q_{s,N}( \Phi_{\tau}^{N}(u))d\tau\bigg) \ind_{B_R}(u)d\mu_{s,N}(u) \\
& \leq  \frac{1}{|t|} \int \int_{I_t} \exp\big( t(p-1) \Q_{s,N}(\Phi_{\tau}^{N}(u))\big) d\tau \ind_{B_R}(u)d\mu_{s,N}(u)  \\
& =  \frac{1}{|t|} \int_{I_t}  \int \exp\big( t(p-1) \Q_{s,N}(\Phi_{\tau}^{N}(u))\big)  \ind_{B_R}(u)d\mu_{s,N}(u) d\tau \\
& =  \frac{1}{|t|} \int_{I_t} \int \exp\big( t(p-1) \Q_{s,N}(u)\big) \ind_{B_R}(u) f_{\tau}^{N}(u)d\mu_{s,N}(u)d\tau \\
& \leq \big\| e^{|t(p-1)\Q_{s,N}(u)|}\ind_{B_R}\|_{L^{p'}(d\mu_{s,N})} \sup_{\tau \in I_{t}}\|f_{\tau}^{N}\ind_{B_R}\|_{L^{p}(d\mu_{s,N})} \\
& = \bigg( \int e^{|tp\Q_{s,N}(u)|}\ind_{B_R}(u)d\mu_{s,N}(u)\bigg)^{\frac{1}{p'}} \sup_{\tau \in I_{t}}\|f_{\tau}^{N}\ind_{B_R}\|_{L^{p}(d\mu_{s,N})}.
\end{align*}
Note that for each fixed $\tau \in I_t$, we can repeat this argument to also obtain
\begin{align*}
\sup_{\tau \in I_{t}}\| f_{\tau}^{N} \ind_{B_R}\|_{L^p (\mu_{s,N})}^{p} \leq \bigg( \int e^{|tp\Q_{s,N}(u)|}\ind_{B_R}(u)d\mu_{s,N}(u)\bigg)^{\frac{1}{p'}} \sup_{\tau \in I_{t}}\|f_{\tau}^{N}\ind_{B_R}\|_{L^{p}(d\mu_{s,N})},
\end{align*}
where we used that $e^{|tp\Q_{s,N}(u)|}$ is increasing in $|t|$. Thus, by Remark~\eqref{RMK:densityfinite}, we find
\begin{align*}
\sup_{\tau \in I_{t}}\| f_{t}^{N} \ind_{B_R}\|_{L^p (\mu_{s,N})} &\leq \bigg(\int e^{|tp\Q_{s,N}(u)|}\ind_{B_R}(u)d\mu_{s,N}(u) \bigg)^{\frac1p} \\
&=  \bigg(\int e^{|tp\Q_{s,N}(u)|\ind_{B_R}(u)}d\mu_{s,N}(u) -\mu_{s,N}(B_{R}) \bigg)^{\frac1p} \\
& \leq \bigg(\int e^{|tp\Q_{s,N}(u)|\ind_{B_R}(u)}d\mu_{s,N}(u) \bigg)^{\frac 1p}.
\end{align*}
The estimate \eqref{Lpbd2} now follows from this and Lemma~\ref{LEM:expint}.
\end{proof}

We now iterate the short-time bounds \eqref{Lpbd} to reach arbitrary times $t\in \R$; namely, to times which no longer depend on $R$. The argument here is similar to that in \cite{FT3}. We point out again that this extra step seems to be necessary due to the restriction on $\ld$ in Lemma~\ref{LEM:expint} which itself is a symptom of the critical nature of \eqref{BOBBM} and the use of an energy cut-off.

\begin{proposition}\label{PROP:LPboundlong}
Let $\frac 12 <s\leq 1$, $t\in \R$, and $R>0$. Then, there exists $p(t,R)>1$ and 
$C(t,R)>0$ such that
\begin{align}
\sup_{N\in \N} \| f_{t}^{N}\ind_{B_{R}}\|_{L^{p(t,R)}(d\mu_{s,N})} \leq C(t,R). \label{Lpbd}
\end{align}
\end{proposition}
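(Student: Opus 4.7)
The plan is to bootstrap the short-time bound of Lemma~\ref{LEM:LPbound} using the cocycle identity for the transported density $f_t^N$. Given $t>0$ (the case $t<0$ being symmetric) and $R>0$, I would construct a partition $0 = t_0 < t_1 < \cdots < t_K = t$ and a decreasing sequence of exponents $P_0 > P_1 > \cdots > P_K > 1$, and prove inductively that
\begin{equation*}
\sup_{N \in \N} \|f^{N}_{t_k} \ind_{B_R}\|_{L^{P_k}(d\mu_{s,N})} \leq M_k
\end{equation*}
for explicit constants $M_k=M_k(t,R)$. The base case $k=0$ is immediate since $f_0^N \equiv 1$ and $\mu_{s,N}(B_R)\leq 1$, while the final step produces the conclusion with $p(t,R) = P_K$ and $C(t,R) = M_K$. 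By reversibility, the same argument yields the analogous bound for $f^N_{-t_k}$ in parallel.

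At the inductive step, I would use the cocycle $f^{N}_{t_{k+1}}(v) = f^{N}_{t_k}(v)\cdot f^{N}_{t_{k+1}-t_k}(\Phi^{N}_{-t_k}v)$ together with the invariance $\ind_{B_R}(\Phi^{N}_{-t_k}v) = \ind_{B_R}(v)$ to factorise the density, then apply H\"older's inequality twice: first to separate $f^N_{t_k}$ from the composition $f^N_{t_{k+1}-t_k}\circ \Phi^{N}_{-t_k}$, and, after the change of variables $(\Phi^{N}_{-t_k})_{\#}\mu_{s,N} = f^{N}_{-t_k} \, d\mu_{s,N}$, a second time to extract the negative-time density. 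This yields an inequality of the schematic form
\begin{equation*}
\|f^{N}_{t_{k+1}}\ind_{B_R}\|_{L^{P_{k+1}}} \les \|f^{N}_{t_k}\ind_{B_R}\|_{L^{P_k}} \cdot \|f^{N}_{t_{k+1}-t_k}\ind_{B_R}\|_{L^{Q_k}} \cdot \|f^{N}_{-t_k}\ind_{B_R}\|_{L^{r'_k}}^{\al_k},
\end{equation*}
where $Q_k$, $r'_k$, $\al_k$ are determined by the H\"older parameters. The middle factor is controlled by Lemma~\ref{LEM:LPbound} provided the short-time constraint $Q_k(t_{k+1}-t_k)\leq c_0/R$ holds; the first factor is controlled by the inductive hypothesis; and the third factor is controlled either by the symmetric inductive hypothesis on $f^{N}_{-t_k}$ or, when $r'_k$ is taken close to $1$, by interpolation between the trivial bound $\|f^{N}_{-t_k}\ind_{B_R}\|_{L^1} \leq \mu_{s,N}(B_R) \leq 1$ and the inductive $L^{P_k}$ bound.

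Following the approach of \cite{FT3}, the delicate part is the choice of schedule $\{(P_k, t_k, r'_k)\}$ so that the iteration closes all the way to $t_K = t$. A quasi-geometric decay $P_{k+1}/P_k = 1/(1+\eta_k)$ with $\eta_k$ tuned to the current value of $P_k$, combined with time increments that saturate the short-time constraint and with the interpolation device for $\|f^{N}_{-t_k}\ind_{B_R}\|_{L^{r'_k}}$, is the natural ansatz. The main obstacle is that the critical dispersion of \eqref{BOBBM} makes the short-time window in Lemma~\ref{LEM:LPbound} narrow, so each iteration step advances time by only a small amount while losing some integrability. Reaching arbitrary $t$ while maintaining $P_K > 1$ therefore requires a careful competition between the shrinking time steps, the decay of the exponents, and the multiplicative constants $M_k$, with the unavoidable trade-off that $P_K \to 1^{+}$ and $M_K \to \infty$ as $t \to \infty$, as reflected in the dependence $p(t,R)$, $C(t,R)$ in the statement.
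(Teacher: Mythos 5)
There is a genuine gap, and it is quantitative, not just a matter of unwritten details. In your cocycle factorisation $f^N_{t_{k+1}} = f^N_{t_k}\cdot (f^N_{\tau_k}\circ\Phi^N_{\pm t_k})$ followed by two applications of H\"older with conjugate pairs $(a,a')$ and $(b,b')$, the first Hölder exponent is constrained by the inductive hypothesis: you need $a'P_{k+1}\leq P_k$, hence $a\geq \frac{P_k}{P_k-P_{k+1}}$, and therefore the exponent carried by the short-time factor satisfies $Q_k = abP_{k+1} > aP_{k+1}\geq\frac{P_kP_{k+1}}{P_k-P_{k+1}}$. The constraint $Q_k\tau_k\leq c_0/R$ from Lemma~\ref{LEM:LPbound} then forces
\begin{equation*}
\tau_k \leq \frac{c_0}{R}\cdot\frac{P_k-P_{k+1}}{P_kP_{k+1}} = \frac{c_0}{R}\Big(\frac{1}{P_{k+1}}-\frac{1}{P_k}\Big),
\end{equation*}
which telescopes: $\sum_{k}\tau_k \leq \frac{c_0}{R}\big(1-\frac{1}{P_0}\big)<\frac{c_0}{R}$. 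No matter how you schedule $\{(P_k,\tau_k,b'_k)\}$, and regardless of the interpolation trick for the third factor (which only affects constants, not the Hölder indices), your iteration never escapes the original short-time window $|t|<c_0/R$. The extra ``competition'' you describe between shrinking time steps, decaying exponents, and growing constants is not delicate but fatal.

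The paper avoids this by \emph{not} raising the density to a power. It instead tests $f^N_{t+\tau}\ind_{B_R}$ against an arbitrary $F$, pulls one flow map out with the push-forward identity, applies H\"older with a \emph{fixed} parameter $p$, pulls the other flow map out, and applies H\"older once more, which via duality yields the two-factor inequality
\begin{equation*}
\|f^N_{t+\tau}\ind_{B_R}\|_{L^r} \leq \|f^N_\tau\ind_{B_R}\|_{L^p}\,\|f^N_t\ind_{B_R}\|_{L^{\frac{r(p-1)}{p-r}}}^{1/p'}, \qquad 1\leq r<p.
\end{equation*}
Here the short-time factor sits at the fixed exponent $p$, so the time step $\tau_R = c_0/(Rp)$ never shrinks; all the degeneration is pushed onto the long-time factor, whose exponent solves the recursion $r_{j+1}=\frac{r_jp}{p+r_j-1}$ and decays geometrically toward $1$ but never reaches it. There is also no third factor $\|f^N_{-t_k}\ind_{B_R}\|$ at all, because duality lets the second H\"older application land on $F$ rather than on a density, which is exactly what your pointwise cocycle computation cannot do. If you want to repair your attempt, replace the pointwise factorisation by the dual pairing $\int F\, f^N_{t+\tau}\ind_{B_R}\,d\mu_{s,N}$ and keep track of which H\"older index is allowed to depend on the step and which must stay fixed.
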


The strategy is similar to that of \cite[Proposition 4.14]{FT3}, however, to make this article as self-contained as possible, we provide some of the details below as well as the modifications. In particular, to obtain $p(t,R)$ for general $t\in \R$, we need to more carefully choose the exponents later in the argument as compared to \cite{FT3}. 

\begin{proof}
First, we claim that 
\begin{align}
\| f_{t+\tau}^{N} \ind_{B_R}\|_{L^r(d\mu_{s,N})} \leq \| f_{\tau}^{N}\ind_{B_R}\|_{L^p(d\mu_{s,N})} \| f_{t}^{N}\ind_{B_R}\|_{L^{\frac{r(p-1)}{p-r}}(d\mu_{s,N})}^{\frac{1}{p'}} \label{timestep}
\end{align}
for any $t, \tau \in \R$ and $1\leq r<p<\infty$.
Let $F:H^{\frac 12}(\T)\to \R$ be continuous and bounded. Then, by the defining property of the push-froward measure $(\Phi_{t}^{N})_{\#} \mu_s$, the group property of the flows $\Phi_{t}^{N}$, the invariance of $B_{R}$ under the flow, and H\"{o}lder's inequality, we have
\begin{align}
\bigg|\int F(u) f_{t+\tau}^{N}(u) \ind_{B_R} d\mu_{s,N}(u)\bigg| &=\bigg| \int F(\Phi_{t+\tau}^{N}(u)) \ind_{B_R}(\Phi_{t+\tau}^{N}(u)) d\mu_{s,N}\bigg| \notag\\
& =\bigg| \int F( \Phi^{N}_{\tau}\circ \Phi_{t}^{N}(u)) \ind_{B_{R}}(\Phi^{N}_{\tau}\circ \Phi_{t}^{N}(u)) d\mu_{s,N}\bigg| \notag\\
& = \bigg|\int F(\Phi_{t}(u))\ind_{B_{R}}(u) f_{\tau}^{N}(u) d\mu_{s,N} \bigg| \notag\\
& \leq\bigg( \int |F(\Phi_{t}^{N}(u))|^{p'}\ind_{B_R}(u)d\mu_{s,N}\bigg)^{\frac{1}{p'}} \| f_{\tau}^{N}\ind_{B_R}\|_{L^{p}(d\mu_{s,N})}   \notag\\
& = \bigg( \int |F(u)|^{p'} f_{t}^{N}(u) \ind_{B_R}(u)d\mu_{s,N}  \bigg)^{\frac{1}{p'}}\| f_{\tau}^{N}\ind_{B_R}\|_{L^{p}(d\mu_{s,N})} \notag \\
&\leq \|F\|_{L^{r'}(d\mu_{s,N})} \|f_{t}^{N}\ind_{B_R}\|_{L^{\frac{r(p-1)}{p-r}}(d\mu_{s,N})}^{\frac{1}{p'}}\| f_{\tau}^{N}\ind_{B_R}\|_{L^{p}(d\mu_{s,N})}. \label{timestep1}
\end{align}
Now, \eqref{timestep} follows from \eqref{timestep1} and duality.

We now iterate \eqref{timestep} to cover general $t\in \R$. We consider positive times as the argument for negative times is the same. Given $1<r_1<\infty$, let $p>r_1$ be sufficiently large so that 
\begin{align}
\frac{p^2}{2p-1} >r_1 \label{p}
\end{align}
and choose $\tau_{R}>0$ such that
\begin{align}
\frac{r_1 (p-1)}{p-r_1}\tau_{R}=\frac{c_0}{R}, \label{tau}
\end{align}
  where $c_0$ is the constant in Lemma~\ref{LEM:expint}. For $j\in \N$, we define the sequence of times $t_j= j \tau_{R}$ and integrability exponents $r_{j}$ by 
\begin{align*}
r_{j+1}= \frac{r_{j}p}{p+r_{j}-1} \quad \text{and} \quad r_{j}\vert_{j=1}=r_1.
\end{align*}
Note that $r_{j}= \frac{r_{j+1}(p-1)}{p-r_{j+1}}$ and $1<r_{j+1}<r_j$ for all $j\in \N$. In fact, a straightforward argument by induction on $j\in \N$ shows that
\begin{align}
r_{j} = \frac{r_1(p-1)}{r_1 (p-1)-p(r_1-1)\frac{1}{(p')^{j}}} = \Big[ 1-\tfrac{p'}{r_1'} e^{-j(\log p')}\Big]^{-1} =  \Big[ 1-\tfrac{p'}{r_1'} e^{-\frac{ t_{j}}{\tau_{R}}(\log p')}\Big]^{-1}. \label{rj}
\end{align}
Thus, \eqref{timestep} implies 
\begin{align}
\| f_{t_{j+1}}^{N}\ind_{B_R} \|_{L^{r_{j+1}}(d\mu_s)} \leq \|f_{\tau}^{N}\ind_{B_R}\|_{L^{p}(d\mu_s)} \|f_{t_j}^{N}\ind_{B_R} \|_{L^{r_{j}}(d\mu_s)}^{\frac{1}{p'}}.
\end{align}
This sets up a recursive inequality for which we get
\begin{align}
\| f_{t_{j}}^{N}\ind_{B_R} \|_{L^{r_{j}}(d\mu_s)} \leq \|f_{\tau}^{N}\ind_{B_R}\|_{L^{p}(d\mu_s)}^{\sum_{k=0}^{j}(p')^{-k}} \leq \max( 1, \| f_{\tau}^{N} \ind_{B_R}\|_{L^{p}(d\mu_s)})^{j}.
\end{align}
Therefore, by Lemma~\ref{LEM:LPbound},
\begin{align}
\sup_{N\in \N}\| f_{t_{j}}^{N}\ind_{B_R} \|_{L^{r_{j}}(d\mu_s)} \le \exp\big(\wt{C}(R){\tfrac{t_j}{\tau_R}}\big), \label{densitytj}
\end{align}
where $\wt{C}(R):= c_0 R^{-1}C(R)$. 

We now obtain bounds for general $t>0$. As we need to account for a further deterioration in the integrability exponent, and in view of \eqref{rj}, we define $q:[0,\infty)\to (1,\infty)$ by 
\begin{align}
q(t) := \Big[ 1-\tfrac{p'}{r_1'} \exp(-\tfrac{2t}{\tau_{R}}(\log p'))\Big]^{-1}. \label{qt}
\end{align}
We see that $q(t)$ is monotonically decreasing, $q(0) = \frac{r_1 (p-1)}{p-r_1}<p$ by \eqref{p}, and by \eqref{tau}, 
\begin{align*}
q(t) t \leq q(0)t \leq q(0) \tau_{R} =\tfrac{c_0}{R}
\end{align*}
for all $0\leq t\leq \tau_{R}$. Therefore, we may directly apply Lemma~\ref{LEM:LPbound} to obtain
\begin{align}
\sup_{N\in \N} \|f_{t}^{N}\ind_{B_R}\|_{L^{q(t)}(d\mu_{s,N})} \leq e^{C(R)(|t|q(t))^{A}}. \label{shortime}
\end{align}
Now assume that $t>\tau_{R}$ and such that there is $j_0\in \N$ (which depends on $t$ and $\tau_R$) such that $j_0 \tau_{R} <t <(j_0 +1)\tau_{R}$. By repeating the argument which derived \eqref{timestep}, we obtain
\begin{align}
\| f_{t}^{N}\ind_{B_R}\|_{L^r (d\mu_{s,N})} \leq \|f_{t-j_0 \tau_{R}}^{N}\ind_{B_R}\|_{L^{p}(d\mu_{s,N})} \|f_{j_0 \tau_{R}}^{N} \ind_{B_R}\|_{L^{ \frac{r(p-1)}{p-r}}(d\mu_{s,N})}^{\frac{1}{p'}} \label{timestep2}
\end{align}
for any $1\leq r<p<\infty$.  We apply \eqref{timestep2} with $r=q(t)$. Since $j_0 <\frac{t}{\tau_{R}}<j_0+1$, it follows from \eqref{qt} and \eqref{rj} that
\begin{align*}
\tfrac{q(t)(p-1)}{p-q(t)}=q\big(t-\tfrac{\tau_R}{2}\big) \leq r_{j_0}.
\end{align*}
Thus, by \eqref{timestep2}, \eqref{shortime}, H\"{o}lder's inequality, and \eqref{densitytj}, we obtain 
\begin{align*}
\| f_{t}^{N}\ind_{B_R}\|_{L^{q(t)}(d\mu_{s,N})}
 & \leq e^{C(R)(\tau_{R}q(\tau_R))^{A}}
 \| f_{j_0 \tau_{R}}\ind_{B_R}\|_{L^{q(t-\frac{\tau_R}{2})}(d\mu_{s,N})}^{\frac{1}{p'}}  \\
 &\leq e^{C(R)(\tau_{R}q(0))^{A} } \| f_{j_0 \tau_{R}}\ind_{B_R}\|_{L^{r_{j_0}}(d\mu_{s,N})}^{\frac{1}{p'}} \\
 & \leq e^{C(R)(\tau_{R}q(0))^{A} } e^{\wt{C}(R)\frac{t_{j_0}}{p'\tau_{R}}} \\
 & \leq e^{\wt{C}(R)(\frac{R(p-1)t}{c_0}+1)},
\end{align*}
uniformly in $N\in \N$. Combining this with \eqref{densitytj} and \eqref{shortime} completes the proof.
\end{proof}

We now move onto the proof of Theorem~\ref{THM:QI}.

\begin{proof}[Proof of Theorem~\ref{THM:QI}]

Let $\frac{1}{2}<\s<s$. By the global well-posedness for \eqref{BOBBM} and \eqref{BON} in $H^{\s}(\T)$ \cite{Mammeri1}, and a standard argument, for each $u_0\in H^{\s}$, we have 
\begin{align}
\lim_{N\to \infty}\| \Phi_{t}(u_0)-\Phi_{t}^{N}(\Pi_{\leq N}u_0)\|_{C([-T,T];H^{\s'})}=0 \label{flowconv}
\end{align}
for every $\s'<\s$ and $T>0$.

Fix $t\in \R$ and $R>0$. Then, with $p=p(t,R)>1$ as in Proposition~\ref{PROP:LPboundlong}, \eqref{Lpbd} implies that there exists $f_{t,R}\in L^{p}(d\mu_s)$ such that, up to a subsequence, $(f_{t}^{N}\ind_{B_R})\circ \Pi_{\leq N}$ converges weakly to $f_{t,R}$ in $L^p(d\mu_s)$ as $N\to\infty$.

We now show that $f_{t,R}$ is supported on the ball $B_{R}$ in $H^{1/2}_0(\T)$.  Let $\dl>0$.
By the weak convergence of $(f_{t}^{N}\ind_{B_R})\circ \Pi_{\leq N}$, independence of $\Pi_{\leq N}u$ and $\Pi_{>N}u$ for $N\in \N$, and \eqref{decayest}, we have
\begin{align*}
\int &f_{t,R}(u)\ind_{B_{R+\dl}^{c}}(u)d\mu_{s}(u)  \\
& = \lim_{N\to \infty} \int f_{t}^{N}(\Pi_{\leq N}u) \ind_{B_R}(\Pi_{\leq N}u) \ind_{B_{R+\dl}^{c}}(u)d\mu_{s} \\
& = \lim_{N\to \infty} \int f_{t}^{N}(u_N) \ind_{B_R}(u_N) \int \ind_{B_{R+\dl}^{c}}(u_N +u_{N}^{\perp}) d\mu_{s,N}^{\perp} d\mu_{s,N} \\
& =  \lim_{N\to \infty} \int f_{t}^{N}(u_N) \ind_{B_R}(u_N)  \mu_{s,N}^{\perp}\bigg( \|u_{N}^{\perp}\|_{H^{1/2}_{0}}^2 > (R+\dl)^{2} -\|u_N\|_{H^{1/2}_{0}}^{2} \bigg) d\mu_{s,N} \\
& \leq C \limsup_{N\to \infty} \frac{1}{\dl RN^{2s-1}}  \int f_{t}^{N}(u_N) \ind_{B_R}(u_N)   d\mu_{s,N} \\
& \leq C \limsup_{N\to \infty} \frac{1}{\dl RN^{2s-1}}   =0.
\end{align*} 
Thus, 
\begin{align*}
\int f_{t,R}(u) \ind_{B_{R+\dl}^{c}}(u) d\mu_s =0
\end{align*}
for every $\dl>0$, and so by the monotone convergence theorem, we conclude that 
\begin{align}
f_{t,R} \ind_{B_{R}^{c}}=0 \quad \mu_s \,\,\text{a.e.} \label{ftRsupp}
\end{align}

Now we show that $f_{t,R}$ is the Radon-Nikodym derivative of $(\Phi_{t})_{\#}\mu_{s,R}$ with respect to $\mu_{s,R}$. Assuming this for now, we get $(\Phi_{t})_{\#}\mu_{s,R} \ll \mu_{s,R}$. By taking $R\to\infty$, this implies $(\Phi_{t})_{\#}\mu_{s} \ll \mu_{s}$. Let $F:H^{1/2}_{0}(\T)\to \R$ be continuous, bounded, and non-negative. 
By the convergence in \eqref{flowconv} and the dominated convergence theorem 
\begin{align}
\lim_{N\to \infty} \int |F(\Phi_{t}(u)) -F(\Phi^{N}_{t}(\Pi_{\leq N}u))| \ind_{B_R}(\Pi_{\leq N}u) d\mu_s = 0. \label{conv1}
\end{align}
Similarly, by \eqref{Lpbd} and the dominated convergence theorem,
\begin{align}
\lim_{N\to \infty} \int |F(u) -F(\Pi_{\leq N}u)|  f_{t}^{N}(\Pi_{\leq N}u)\ind_{B_R}(\Pi_{\leq N}u) d\mu_s = 0. \label{conv2}
\end{align}
Then, by monotone convergence, \eqref{conv1}, \eqref{conv2} and \eqref{ftRsupp}, we have
\begin{align*}
\int F(u) d(\Phi_{t})_{\#}\mu_{s,R} & = \int F(\Phi_{t}(u)) \ind_{B_R}(u) d\mu_s \\
& = \lim_{N\to \infty} \int F(\Phi_{t}(u)) \ind_{B_R}(\Pi_{\leq N}u) d\mu_s  \\
& = \lim_{N\to \infty} \int F(\Phi^{N}_{t}(\Pi_{\leq N}u))\ind_{B_R}(\Pi_{\leq N}u) d\mu_s \\
& = \lim_{N\to \infty} \int F(\Phi^{N}_{t}(\Pi_{\leq N}u))\ind_{B_R}(\Pi_{\leq N}\Phi_{t}^{N}(\Pi_{\leq N}u))  d\mu_s\\
& = \lim_{N\to \infty} \int F(\Pi_{\leq N}u) f_{t}^{N}(\Pi_{\leq N}u) \ind_{B_R}(\Pi_{\leq N}u)d\mu_{s} \\
& = \lim_{N\to \infty} \int F(u) f_{t}^{N}(\Pi_{\leq N}u) \ind_{B_R}(\Pi_{\leq N}u)d\mu_{s} \\
& = \int F(u) f_{t,R}(u) d\mu_s  \\
& = \int F(u) f_{t,R}(u) d\mu_{s,R}.
\end{align*} 
As this identity holds for all such $F$, we obtain that $d(\Phi_{t})_{\#}\mu_{s,R} = f_{t,R} d\mu_{s,R}$.
This completes the proof of Theorem~\ref{THM:QI}.
\end{proof}

\section{Proof of Lemma~\ref{LEM:expint}} \label{SEC:expint}

We need exponential integrability of $\Q_{s,N}$, uniformly in $N$ and locally on the support of $\mu_{s,N}$. Using the simplified variational formula \cite[Lemma 2.6]{FT2}, we have
\begin{align}
\begin{split}
\log \int &\exp\big( \ld |\Q_{s,N}(u)| \ind_{B_{R}}(u)\big) d\mu_{s,N} \\
& \leq  \E \bigg[  \sup_{V\in H^{s+1/2}}\Big\{ \ld |\Q_{s,N}(Y_{N}+V_{N})|\ind_{B_R}(Y_{N}+V_{N}) -\tfrac12 \|V_N\|_{H^{s+1/2}}^{2}      \Big\}  \bigg], 
\end{split}\label{BDP}
\end{align}
where $Y$ is distributed according to $\mu_{s,N}$, $Y_{N}:=\P_{\leq N}Y$, and $V_{N}:=\P_{\leq N}V$. 
For $Y_{N}+V_{N}\in B_{R}$, the triangle inequality implies 
\begin{align}
\|V_{N}\|_{H^{1/2}} \leq R^{1/2}+ \|Y_{N}\|_{H^{1/2}}. \label{triangle}
\end{align}
The goal is to show that the right-hand side of \eqref{BDP} is finite, uniformly in $N\in \N$, leveraging both the cutoff $\ind_{B_R}$ and the negative terms $\frac12 \|V_N\|_{H^{s+1/2}}^{2}$. To do this, we need to decompose the operator $\Q_{s,N}$ as in \eqref{Qdecomp} and then further decompose each of $\Q_{s,N}^{(j)}$, $j=1,2$ as in \eqref{Q1} and \eqref{Q2}
into a certain \textit{finite} number of parts $L\in \N$; namely,
\begin{align*}
|\Q_{s,N}(Y_N+V_N)| \les \sum_{j=1}^{2}\sum_{\l=1}^{L} |\Q^{(j)}_{s,N,\l}(Y_N+V_N)|
\end{align*}
for some operators $\{\Q^{(j)}_{s,N,\l}\}_{\l=1}^{L}$, $j=1,2$, and control their contributions to \eqref{BDP} separately. 

In order to use the negative terms in $\frac12 \|V_N\|_{H^{s+1/2}}^{2}$ for each of these $\{\Q_{s,N,\l}\}_{\l=1}^{L}$, we use
\begin{align*}
\text{RHS} \eqref{BDP} \leq \sum_{j=1}^{2}\sum_{\l=1}^{L} \E\Big[ \sup_{V\in H^{s+1/2}}\Big\{ \ld |\Q^{(j)}_{s,N,\l}(Y_N+V_N)|\ind_{B_R}(Y_N+V_N) -\tfrac{1}{4L} \|V_N\|_{H^{s+1/2}}^{2}   \big\} \Big].
\end{align*}
It then suffices to control each of the summands. To be concrete, it will be more than enough in the following to take $L= 100$.

In the following sections, to simplify the notation, we write $Y$ and $V$ in place of $Y_N$ and $V_N$, respectively.

\subsection{Contribution from $\Q^{(1)}_{s,N}$}
In view of Lemma~\ref{LEM:Psis}, we make the following dyadic decompositions:
\begin{align}
\Q^{(1)}_{s,N}(u) 
& = \sum_{ \substack{N_1, N_2, N_3 \in 2^{\N_0} \\  N_{1} \sim N_{2} \sim N_{3}}}\Q^{(1)}_{s,N}( \P_{N_1}u, \P_{N_2}u, \P_{N_3}u)  \label{case1}\\
& \hphantom{X}+ \sum_{ \substack{N_1, N_2, N_3 \in 2^{\N_0} \\  N_{(2)}\gg N_{(3)}}}\Q^{(1)}_{s,N}( \P_{N_1}u, \P_{N_2}u, \P_{N_3}u). \label{case2}
\end{align}

We first consider the contribution from \eqref{case2}. By symmetry, we may assume that $N_1 \geq N_2 \geq N_3$.
Then, with $u=Y+V$, we have the following unique cases to consider:
\begin{align*}
(Y_{N_1},Y_{N_2},Y_{N_3}), \,\,\,(Y_{N_1},Y_{N_2},V_{N_3}), \,\,\, (Y_{N_1},V_{N_2},Y_{N_3}), \,\,\, (Y_{N_1}, V_{N_2},V_{N_3}), \,\,\,    (V_{N_1},V_{N_2},Y_{N_3}+V_{N_3}).
\end{align*}
In these cases, by Lemma~\ref{LEM:Psis}, we always have $|\Psi_{s}(\cj{n})|\les N_{1}^{2s} N_3$.

\smallskip
\noi
\underline{\textbf{Case 1:} $(Y_{N_1},Y_{N_2},Y_{N_3})$}

\smallskip
\noi
For this case, we need to use the gain coming from random oscillations. We have
\begin{align*}
\E\big[ |\Q^{(1)}_{s,N}(Y_{N_1}, Y_{N_2}, Y_{N_3})|^{2}\big]& = \E \bigg[  \bigg|   \sum_{ n_{123}=0} \Psi_{s}(\cj{n}) \prod_{j=1}^{3} \phi_{N_j}(n_j) \ft Y(n_j)    \bigg|^{2} \bigg] \\
& = \sum_{ \substack{n_{123}=0 \\ m_{123}=0}} \Psi_{s}(\cj{n}) \Psi_{s}(\cj{m})\prod_{j=1}^{3} \frac{ \phi_{N_j}(n_j) \phi_{N_j}(m_j)  }{ |n_j|^{s+1/2}|m_j|^{s+1/2}} \E[ g_{n_1}g_{n_2}g_{n_3}\cj{g_{m_1}g_{m_2}g_{m_3}}].
\end{align*}
We use Isserlis theorem to control the expectation and exploit the independence structure.
As all the individual frequencies are non-zero, we cannot have any self-interactions of the form $n_{j}+n_{j'}=0$ for $j\neq j'$. Moreover, since $N_1, N_2 \gg N_3$, we must have that $n_3= m_3$ and thus $(n_1,n_2)=(m_1,m_2)$ or $(n_1,n_2)=(m_2,m_1)$. Both of these cases yield the same contribution, so we then have
\begin{align*}
& \les \sum_{ \substack{n_{123} =0 \\ |n_j| \sim N_j, j=1,2,3}} |\Psi_{s}(\cj{n})|^{2} (|n_1||n_2||n_3|)^{-2s-1} \les N_{1}^{4s} N_{3}^{2} N_{1}^{-4s-2}N_{3}^{-2s-1} N_{1}N_{3}   \les N_{1}^{-1}N_{3} N_{3}^{1-2s}
\end{align*}
By first summing in $N_1$ over $N_1 \gg N_3$ and using that $s>\frac 12$ to sum in $N_3$, this contribution to \eqref{BDP} is thus controlled.

\smallskip
\noi
\underline{\textbf{Case 2:} $(Y_{N_1},Y_{N_2},V_{N_3})$}

\smallskip
\noi
We write 
\begin{align}
\Q^{(1)}_{s,N}( Y_{N_1}, Y_{N_2}, V_{N_3}) = \sum_{n_3} |n_3|^{s+\frac 12} \ft V(n_3)  \phi_{N_3}(n_3)|n_3|^{-s-\frac 12} K^{(1)}_{N_1 N_2 N_3}(n_3), \label{Q1K1}
\end{align}
where 
\begin{align}
K^{(1)}_{N_1 N_2 N_3}(n_3) : =\sum_{n_1, n_2} \ind_{\{n_1+n_2+n_3=0\}} \Psi_{s}(\cj{n}) \phi_{N_1}(n_1) \phi_{N_2}(n_2) \ft Y(n_1) \ft Y(n_2). \label{K1}
\end{align}
Let $0<\dl <\min(s-\frac 12, \frac 12)$ and $\eps=\eps(s,\dl,L)>0$ such that 
\begin{align*}
2L \eps\sum_{ \substack{N_1, N_2, N_3 \in 2^{\N_0}  \\  N_1 \sim N_{2}\gg N_{3}}}N_{1}^{-2\dl} \leq \frac{1}{4L}.
\end{align*}
 Then, by Cauchy-Schwarz and Cauchy's inequality, we have
\begin{align*}
\ld |\Q^{(1)}_{s,N}( Y_{N_1}, Y_{N_2}, V_{N_3}) | & \leq \eps N_{1}^{-2\dl} \| V\|_{H^{s+\frac 12}}^{2} + \ld ^2 C_{\eps}N_{1}^{2\dl}\sum_{n_3} |\phi_{N_3}(n_3)|^2 |n_3|^{-2s-1} |K^{(1)}_{N_1 N_2 N_3}(n_3)|^{2}.
\end{align*}
It remains to control the expected value of the second term. As in Case 1, we have 
\begin{align*}
\E \bigg[  N_{1}^{2\dl} \sum_{n_3} & |\phi_{N_3}(n_3)|^2 |n_3|^{-2s-1} |K^{(1)}_{N_1 N_2 N_3}(n_3)|^{2} \bigg]\\
 & \sim N_{1}^{2\dl} N_{3}^{-2s-1} \sum_{|n_3|\sim N_3} 
 \sum_{ \substack{n_1,n_2 \\ n_1+n_2+n_3=0}} \frac{|\Psi_{s}(\cj{n}) \phi_{N_1}(n_1) \phi_{N_2}(n_2)|^2 }{(|n_1||n_2|)^{2s+1}} \\
 & \les N_{1}^{2\dl} N_{3}^{-2s-1} \cdot N_{3}N_{1} \cdot N_{1}^{-4s-2} \cdot N_{1}^{4s}N_{3}^{2} \\
 & \sim \bigg( \frac{N_3}{N_1}\bigg)^{1-2\dl} N_{3}^{-2s+1+2\dl}.
\end{align*}
This is summable, first over $N_1\gg N_3$, and then over $N_3$ since $s>\frac 12$.

\smallskip
\noi
\underline{\textbf{Case 3:} $(Y_{N_1},V_{N_2},Y_{N_3})$}

\smallskip
\noi
In this case, we proceed similarly to Case 2, writing
\begin{align*}
\Q^{(1)}_{s,N}( Y_{N_1}, V_{N_2}, Y_{N_3})  = \sum_{n_2} |n_2|^{s+\frac 12} \ft V(n_2) \phi_{N_2}(n_2)|n_2|^{-s-\frac 12} K^{(2)}_{N_1 N_2 N_3}(n_2),
\end{align*}
where 
\begin{align*}
K^{(2)}_{N_1 N_2 N_3}(n_2) : = \sum_{n_1, n_3} \ind_{\{ n_{123}=0\}} \Psi_{s}(\cj{n}) \phi_{N_1}(n_1)\phi_{N_3}(n_3) \ft Y(n_1) \ft Y(n_3).
\end{align*}
Note that $\E[g_{n_1}g_{n_3}\cj{g_{m_1}g_{m_3}}]= \dl_{n_1m_1}\dl_{n_3 m_3}$ since $|n_1|\gg |n_3|$.
Thus, we have 
\begin{align*}
\E \bigg[ N_{2}^{2\dl} \sum_{n_2} |\phi_{N_2}(n_2)|^2 |n_2|^{-2s-1} |K^{(2)}_{N_1 N_2 N_3}(n_2)|^{2} \bigg] &\sim N_2^{2\dl-2s-1} \cdot N_{1}^{4s}N_{3}^{2} \cdot N_{1}N_{3}\cdot (N_{1}N_{3})^{-2s-1} \\
& \les \bigg( \frac{N_3}{N_1}\bigg)^{1-2\dl} N_{3}^{-2s+1+2\dl}.
\end{align*}
We then sum as in Case 2.

\smallskip
\noi
\underline{\textbf{Case 4:} $(Y_{N_1},V_{N_2},V_{N_3})$}

\smallskip
\noi
A simple Cauchy-Schwarz argument, that uses only the regularity of $Y_{N_1}$ and not its Gaussian structure, suffices to control this contribution when $s<1$ but fails when $s\geq 1$. In the following, we present a unified approach that deals with all $s>\frac 12$, which does use the explicit structure of $Y_{N_1}$. 
Let $0<\dl<\frac 14 (s-\frac12)$, $\eps=\eps(s,\dl)>0$ such that 
\begin{align*}
2L \eps  \sum_{ \substack{N_1, N_2, N_3 \in 2^{\N_0}  \\  N_1 \sim N_{2}\gg N_{3}}}N_{1}^{-\frac{2\dl s}{2s-
\dl}} \leq \frac{1}{4L},
\end{align*}
and define the random variable
\begin{align*}
B_{N_1}(\o) : = \bigg( \sum_{n_1 \in \Z_{\ast}} |\phi_{N_1}(n_1)|^{2} |\ft Y(n_1)|^{2} \bigg)^{1/2}.
\end{align*}
By Cauchy-Schwarz, interpolation, \eqref{triangle}, and Young's inequality, we have 
\begin{align*}
\ld |\Q^{(1)}_{s,N}( Y_{N_1}, V_{N_2}, Y_{N_3})| & \les \ld  N_{1}^{2s} N_{3} \sum_{n_{123}=0} |\ft V_{N_1}(n_1)| |\ft Y_{N_2}(n_2)| |\ft V_{N_3}(n_3)| \\
& \les  \ld N_{1}^{2s}N_{3} \cdot N_{2}^{-s-\frac 12} N_{3}^{\frac 12} \cdot N_{3}^{-s-\frac 12 +\dl} B_{N_1}(\o)
 \|V_{N_2}\|_{H^{s+\frac 12}} \|V_{N_3}\|_{H^{s+\frac 12 -\dl}} \\
& \les \ld  N_{1}^{s-\frac 12}N_{3}^{1-s+\dl} B_{N_1}(\o) 
\|V_{N_2}\|_{H^{s+\frac 12}} \|V_{N_3}\|_{H^{s+\frac 12}}^{1-\frac {\dl}{s}} \|V_{N_2}\|_{H^{\frac 12}}^{\frac{\dl}{s}} \\
& \les \ld  N_{1}^{s-\frac 12}N_{3}^{1-s+\dl}B_{N_1}(\o)  (R^{\frac 12}+\|Y_{N_3}\|_{H^{\frac 12}})^{\frac{\dl}{s}}
\|V_N\|_{H^{s+\frac 12}}^{2-\frac{\dl}{s}} \\
& \les  \ld^{\frac{2s}{\dl}}C_{\eps,\dl} \big[N_{1}^{s-\frac 12+\dl}N_{3}^{1-s+\dl}B_{N_1}(\o)  (R^{\frac 12}+\|Y_{N_3}\|_{H^{\frac 12}})^{\frac{\dl}{s}} \big]^{\frac{2s}{\dl}} + \eps N_{1}^{-\frac{2\dl s}{2s-\dl}} \|V_N\|_{H^{s+\frac 12}}^{2}.
\end{align*}
It remains to control the first term under after taking an expectation. For any finite $p\geq 2$, Minkowski's inequality implies 
\begin{align}
\E \big[ B_{N_1}(\o)^{p}\big]^{\frac 1p} \les \bigg( \sum_{n_1 \in \Z_{\ast}} |\phi_{N_1}(n_1)|^{2} |n_1|^{-2s-1}  \E[|g_{n_1}|^{p}]^{\frac 2p}\bigg)^{\frac 12} \les_p   N_{1}^{-s}. \label{BN1bd}
\end{align}
Thus, by Cauchy-Schwarz and \eqref{BN1bd}, we have 
\begin{align*}
\E \bigg[ & \sum_{ \substack{N_1, N_2, N_3 \in 2^{\N_0}  \\  N_1 \sim N_{2}\gg N_{3}}} \big[N_{1}^{s-\frac 12+\dl}N_{3}^{1-s+\dl}B_{N_1}(\o)  (R^{\frac 12}+\|Y_{N_3}\|_{H^{\frac 12}})^{\frac{\dl}{s}} \big]^{\frac{2s}{\dl}}\bigg]\\
& =  \sum_{ \substack{N_1, N_2, N_3 \in 2^{\N_0}  \\  N_1 \sim N_{2}\gg N_{3}}}  \big(N_{1}^{s-\frac 12+\dl}N_{3}^{1-s+\dl} \big)^{\frac{2s}{\dl}} \E\Big[ B_{N_1}(\o)^{\frac{2s}{\dl}} (R^{\frac 12}+\|Y_{N_3}\|_{H^{1/2}})^{2}\Big]\\
&  \les \sum_{ \substack{N_1, N_2, N_3 \in 2^{\N_0}  \\  N_1 \sim N_{2}\gg N_{3}}}  \big(N_{1}^{s-\frac 12+\dl}N_{3}^{1-s+\dl} \big)^{\frac{2s}{\dl}} \E \big[ B_{N_1}(\o)^{\frac{4s}{\dl}}\big]^{\frac{1}{2}}  \E[ (R^{\frac 12}+\|Y_{N_3}\|_{H^{1/2}})^{4}]^{\frac{1}{2}}  \\ 
& \les_{R}  \sum_{ \substack{N_1, N_2, N_3 \in 2^{\N_0}  \\  N_1 \sim N_{2}\gg N_{3}}}  \big(N_{1}^{s-\frac 12+\dl}N_{3}^{1-s+\dl} N_{1}^{-s} \big)^{\frac{2s}{\dl}}\\ 
& \les_{R}  \sum_{ \substack{N_1, N_2, N_3 \in 2^{\N_0}  \\  N_1 \sim N_{2}\gg N_{3}}}  \Big(  \frac{N_3}{N_1} \Big)^{\frac{2s}{\dl}(\frac 12-\dl)} N_{3}^{(\frac 12-s+2\dl) \frac{2s}{\dl}}
\end{align*}
which is summable, first in $N_1\gg N_3$ and using that $s>\frac 12$ and $\dl$ is sufficiently small. 

\smallskip
\noi
\underline{\textbf{Case 5:} $(V_{N_1},V_{N_2},Y_{N_3}+V_{N_3})$}

\smallskip
\noi
By Cauchy-Schwarz, we have 
\begin{align*}
|\Q^{(1)}_{s,N}&( V_{N_1}, V_{N_2}, Y_{N_3}+V_{N_3})| \\
& \les N_{1}^{2s}N_{3}\bigg( \sum_{ \substack{n_{123}=0 \\ |n_j|\sim N_j}} |\ft V_{N_1}(n_1)|^{2} \bigg)^{\frac 12}\bigg( \sum_{ \substack{n_{123}=0 \\ |n_j|\sim N_j}} |\ft V_{N_2}(n_2)|^{2}|\mathcal{F}\{ Y_{N_3}+V_{N_3}\}(n_3)|^2 \bigg)^{\frac 12} \\
&\les N_{1}^{-1} N_{3} \|V_{N_1}\|_{H^{s+\frac 12}} \|V_{N_2}\|_{H^{s+\frac 12}} \|Y_{N_3}+V_{N_3}\|_{H^{\frac 12}}.
\end{align*}
We can handle the $N_3$ summation using $\sum_{N_3 \ll N_1} N_{1}^{-1}N_{3}\les 1$, and then the summations over $N_1$ and $N_2$ since 
\begin{align}
\sum_{N_1 \sim N_2}  \|V_{N_1}\|_{H^{s+\frac 12}} \|V_{N_2}\|_{H^{s+\frac 12}} \les \sum_{N_1} \|V_{N_1}\|_{H^{s+\frac 12}}\|\wt{\P}_{N_1} V_{N_1}\|_{H^{s+\frac 12}}\les  \|V\|_{H^{s+\frac 12}}^{2}. 
\end{align}
Thus, for $Y+V\in B_{R}$, there is a $C>0$ independent of $N\in \N$ and $R>0$, such that
\begin{align}
\sum_{ \substack{N_1, N_2, N_3 \in 2^{\N_0} \\  N_1 \sim N_{2}\gg N_{3}}} \ld |\Q^{(1)}_{s,N}( V_{N_1}, V_{N_2}, Y_{N_3}+V_{N_3})|  \leq   C \ld R^{\frac 12} \|V\|_{H^{s+\frac 12}}^{2}. \label{criticalbound}
\end{align}
Thus, 
\begin{align*}
\E \Big[ \sup_{V\in H^{s+1/2}} \Big\{  C \ld R^{\frac 12} \|V\|_{H^{s+1/2}}^{2} 
- \tfrac{1}{4L} \|V\|_{H^{s+1/2}}^{2} \Big\} \Big] \leq  0
\end{align*}
provided that we choose $\ld>0$ so that $C \ld R^{\frac 12} \leq \frac{1}{4L}$.

This completes the estimates for \eqref{case2}. 
We move onto \eqref{case1}, where all the frequencies are similar. In this case, we always have $|\Psi_{s}(\cj{n})|\les N_{1}^{2s+1}$. By symmetry, we only have three cases depending on how many $Y$ factors there are. 

\smallskip
\noi
\underline{\textbf{Case 1:} $(V_{N_1},V_{N_2},Y_{N_3}+V_{N_3})$}

\smallskip
\noi
This contribution can be handled exactly as in Case 5 when we controlled \eqref{case2}, and we obtain 
\begin{align*}
\sum_{ \substack{N_1, N_2, N_3 \in 2^{\N_0} \\  N_1 \sim N_{2}\sim N_{3}}} |\Q^{(1)}_{s,N}( V_{N_1}, V_{N_2}, Y_{N_3}+V_{N_3})| 
& \les  \sum_{ \substack{N_1, N_2, N_3 \in 2^{\N_0} \\  N_1 \sim N_{2}\sim N_{3}}} \|V_{N_1}\|_{H^{s+\frac 12}} \|V_{N_2}\|_{H^{s+\frac 12}} \|Y_{N_3}+V_{N_3}\|_{H^{\frac 12}} \\
& \les \sum_{N_1}  \|V_{N_1}\|_{H^{s+\frac 12}} \sum_{N_2 \sim N_3}  \|V_{N_2}\|_{H^{s+\frac 12}} \|Y_{N_3}+V_{N_3}\|_{H^{\frac 12}}  \\
& \les  \sum_{N_1}  \|V_{N_1}\|_{H^{s+\frac 12}} \sum_{N_2 \sim N_1}   \|V_{N_2}\|_{H^{s+\frac 12}} \|\wt{\P}_{N_2}( Y_{N_2}+V_{N_2})\|_{H^{\frac 12}}  \\
& \les R^{\frac 12} \sum_{N_1\sim N_2}\|V_{N_1}\|_{H^{s+\frac 12}}\|V_{N_2}\|_{H^{s+\frac 12}} \\
& \les R^{\frac 12} \|V\|_{H^{s+\frac 12}}^{2},
\end{align*}
which is the same bound as in \eqref{criticalbound}.

\smallskip
\noi
\underline{\textbf{Case 2:} $(Y_{N_1},V_{N_2},Y_{N_3})$}

\smallskip
\noi
We argue exactly as in Case 3 when we handled \eqref{case2}. With $K^{(2)}_{N_1 N_2N_3}$ chosen analogously to \eqref{K1} in \eqref{Q1K1}, when $N_1 \sim N_2\sim N_3$, we obtain
\begin{align*}
\E \bigg[ N_{2}^{2\dl} \sum_{n_2} |\phi_{N_2}(n_2)|^2 |n_2|^{-2s-1} |K^{(2)}_{N_1 N_2 N_3}(n_2)|^{2} \bigg] \les N_{1}^{-2s+1+2\dl}
\end{align*}
which is summable as $s>\frac 12$ and we may choose $\dl>0$ sufficiently small.

\smallskip
\noi
\underline{\textbf{Case 3:} $(Y_{N_1},Y_{N_2},Y_{N_3})$}

\smallskip
\noi
We compute
\begin{align*}
\E\big[ |\Q^{(1)}_{s,N}(Y_{N_1}, Y_{N_2}, Y_{N_3})|^{2}\big] 
& \les \sum_{ \substack{n_{123} =0 \\ |n_j| \sim N_j, j=1,2,3}} |\Psi_{s}(\cj{n})|^{2} (|n_1||n_2||n_3|)^{-2s-1}  \\
& \les N_{1}^{4s+2}\cdot N_{1}^{-6s-3}\cdot N_{1}^{2} \sim N_{1}^{-2s+1},
\end{align*}
which is summable as $s>\frac 12$.

This completes the estimation of \eqref{case1} and thus also the estimates for the main part  $\Q^{(1)}_{s,N}$. 

\subsection{Contribution from $\Q^{(2)}_{s,N}$}

We now handle the easier contribution in \eqref{BDP} coming from $\Q^{(2)}_{s,N}$.
We further expand this operator as follows: 
\begin{align*}
\Q^{(2)}_{s,N}(u)  &= \jb{ |D_{x}|^{2s}u_{N} , \H(u_N^2)} - \jb{|D_x|^{2s}u_{N}, \H(1+|D_x|)^{-1} (u_N^2)} \\
& =: \Q^{(2,A)}_{s,N}(u) +\Q^{(2,B)}_{s,N}(u).
\end{align*}
Whilst the first term $\Q^{(2,A)}_{s,N}$ is not critical in the sense that $\Q^{(1)}_{s,N}$ is, there are still $\text{high-high-low}$ interactions which require $s$-derivatives to be absorbed by the high-frequency factors, which is too much for the $Y$ factors to handle in a naive way. Thus, we still require some decompositions depending on the factors $Y$ or $V$. As a warm up to this, we dispense with the term $\Q^{(2,B)}_{s,N}$.

\begin{lemma}\label{LEM:Q2B}
For any $\frac 12 < s< 1$, it holds that 
\begin{align}
|\Q_{s,N}^{(2,B)}(u)| \les \| u_{N}\|_{H^{1/2}}^{3}, \label{Q2B1}
\end{align}
while when $s=1$, 
\begin{align}
|\Q_{1,N}^{(2,B)}(u)| \les \| u_{N}\|_{H^{1/2}}^{2} \|u_N\|_{L^{\infty}}. \label{Q2B2}
\end{align}
\end{lemma}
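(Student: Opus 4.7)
The plan is to exploit the smoothing factor $(1+|D_x|)^{-1}$ in $\Q^{(2,B)}_{s,N}$ to put the trilinear expression into a Cauchy--Schwarz-friendly shape, pulling out one factor of $\|u_N\|_{H^{1/2}}$ and reducing matters to a Sobolev estimate on $u_N^2$. Since $|D_x|^{\sigma}$ and $(1+|D_x|)^{-1}$ are self-adjoint on $L^2(\T)$, $\H$ is skew-adjoint, and all of these operators commute as functions of $D_x$, I would first rewrite
\[
\Q^{(2,B)}_{s,N}(u) \;=\; -\jb{|D_x|^{1/2} u_N,\, T_s(u_N^2)},
\qquad T_s \;:=\; \H\,(1+|D_x|)^{-1}\,|D_x|^{2s-1/2}.
\]
The Fourier multiplier of $T_s$ has modulus $\tfrac{|n|^{2s-1/2}}{1+|n|} \leq \jb{n}^{2s-3/2}$ on $n\in\Z_\ast$, so by Plancherel $T_s:H^{2s-3/2}(\T)\to L^2(\T)$ is bounded and Cauchy--Schwarz gives
\[
|\Q^{(2,B)}_{s,N}(u)| \;\les\; \|u_N\|_{H^{1/2}}\,\|u_N^2\|_{H^{2s-3/2}}.
\]

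It then remains to bound $\|u_N^2\|_{H^{2s-3/2}}$ in each regime. For $\tfrac 12 < s < 1$ the target index satisfies $2s-\tfrac 32 < \tfrac 12$, so the one-dimensional Sobolev product estimate $\|fg\|_{H^{\sigma}(\T)} \les \|f\|_{H^{1/2}}\|g\|_{H^{1/2}}$ is available for any $\sigma < \tfrac 12$ (the standard proof requires $\tfrac 12+\tfrac 12-\sigma > \tfrac 12$, which holds precisely when $s<1$); applying this with $f=g=u_N$ and $\sigma=2s-\tfrac 32$ yields \eqref{Q2B1}. At the endpoint $s=1$ the target index is exactly $\tfrac 12$, and $H^{1/2}(\T)$ just fails to be an algebra, so the above product estimate is unavailable; instead I would invoke the fractional Leibniz / Kato--Ponce inequality
\[
\|fg\|_{H^{1/2}(\T)} \;\les\; \|f\|_{H^{1/2}}\|g\|_{L^\infty} + \|f\|_{L^\infty}\|g\|_{H^{1/2}},
\]
applied with $f=g=u_N$ to obtain $\|u_N^2\|_{H^{1/2}} \les \|u_N\|_{H^{1/2}}\|u_N\|_{L^\infty}$, which combined with the Cauchy--Schwarz step yields \eqref{Q2B2}.

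The only real obstacle is the endpoint $s=1$, where $H^{1/2}(\T)$ just barely fails to be an algebra. This is the same borderline phenomenon, of Brezis--Gallou\"{e}t type, that makes $s=\tfrac 12$ critical for the deterministic well-posedness of \eqref{BOBBM2}, and it is the reason the endpoint estimate must record an $L^\infty$ norm rather than being purely in terms of $\|u_N\|_{H^{1/2}}$. In the subsequent use of \eqref{Q2B2} inside the variational bound \eqref{BDP} localised to $B_R$, the Gaussian $L^\infty$-moments on the support of $\mu_{s,N}$ will then absorb the extra factor $\|u_N\|_{L^\infty}$.
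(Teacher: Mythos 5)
Your proof is correct and takes essentially the same route as the paper: both rewrite the pairing as $\jb{|D_x|^{1/2}u_N, \H|D_x|^{2s-1/2}(1+|D_x|)^{-1}(u_N^2)}$, observe the multiplier is $O(|n|^{2s-3/2})$, and apply Cauchy--Schwarz to reduce to $\|u_N\|_{H^{1/2}}\|u_N^2\|_{H^{2s-3/2}}$, finishing with a product/Leibniz estimate. The only cosmetic difference is that you invoke the general one-dimensional Sobolev product inequality $\|fg\|_{H^\sigma}\les\|f\|_{H^{1/2}}\|g\|_{H^{1/2}}$ for $\sigma<\tfrac12$ in one shot, whereas the paper handles $\tfrac12<s\le\tfrac34$ (where $2s-\tfrac32\le 0$) and $\tfrac34<s<1$ separately via Sobolev embedding and fractional Leibniz; the $s=1$ endpoint via Kato--Ponce is identical.
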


\begin{proof}
By rewriting and using Cauchy-Schwarz and the fractional Leibniz rule,
\begin{align*}
|\Q_{s,N}^{(2,B)}(u)| 
& = |\jb{ |D_x|^{\frac 12} u_N , \H |D_x|^{2s-\frac 12}(1+|D_x|)^{-1}(u_N^2)}|\les \|u_N\|_{H^{\frac 12}}\| u_{N}^{2}\|_{H^{2s-\frac 32}}.
\end{align*}
If $s\leq \frac 34$, then $2s-\frac 32\leq 0$ and hence \eqref{Q2B1} follows by Sobolev embedding $H^{\frac 12}(\T)\embeds L^{4}(\T)$. If $\frac 34<s<1$, then by the fractional Leibniz rule and Sobolev embedding, we have 
\begin{align*}
\| u_N^2 \|_{H^{2s-\frac 32}} \les \| |D_x|^{2s-\frac 32} u_N\|_{L^{\frac{2}{4s-3}}} \|u_N\|_{L^\frac{1}{2(1-s)}}
& \les \| u_N\|_{H^{\frac 12}}^2,
\end{align*}
which completes the proof of \eqref{Q2B1}. 

As for \eqref{Q2B2}, we have by Cauchy-Schwarz and the fractional Leibniz rule,
\begin{align*}
|\Q_{1,N}^{(2,B)}(u)|  = | \jb{|D_x|^{\frac 12}u_N , \H |D_x|^{\frac 12}(u_N^2)}| \leq \| u_N\|_{H^{\frac 12}} \|u_N^2 \|_{H^{\frac 12}}^{2} \les \|u_{N}\|_{H^{\frac 12}} \|u_N\|_{L^{\infty}},
\end{align*}
establishing \eqref{Q2B2}.
\end{proof}

\noi
Thus, it follows from Lemma~\ref{LEM:Q2B} that when $\frac 12<s<1$, for $Y_N +V_N \in B_{R}$,
\begin{align}
|\Q^{(2,B)}_{s,N}(Y_N+V_N)| \les R^{\frac{3}{2}},
\end{align}
uniformly in $N\in \N$. When $s=1$, we have instead by Young's inequality and Sobolev embedding 
\begin{align*}
\ld |\Q^{(2,B)}_{s,N}(Y_N+V_N)| \les \ld R (\|Y_N\|_{L^{\infty}} +\|V_N\|_{L^{\infty}}) \leq C \ld R\|Y_N\|_{L^{\infty}}+\ld ^2 C_{R} +\tfrac{1}{4L} \|V_{N}\|_{H^{s+1/2}}^2,
\end{align*}
where the implicit constants $C$ and $C_{L,R}$ do not depend on $N\in \N$.

We move onto controlling $\Q^{(2,A)}_{s,N}$. We decompose each of the input functions:
\begin{align}
\Q^{(2,A)}_{s,N}(u)& = 
2 \sum_{ \substack{N_1, N_2, N_3 \in 2^{\N_0} \\ N_{2}\geq N_3}} \Q^{(2,A)}_{s,N}( \P_{N_1}u, \P_{N_2}u, \P_{N_3}u) \notag \\
& = 2 \sum_{ \substack{N_1, N_2, N_3 \in 2^{\N_0} \\  N_{2} \sim N_3 \ges N_1}}\Q^{(2,A)}_{s,N}( \P_{N_1}u, \P_{N_2}u, \P_{N_3}u) \label{Q2A1} \\
& \hphantom{X}+2\sum_{ \substack{N_1, N_2, N_3 \in 2^{\N_0} \\  N_{2} \sim N_1 \gg N_3}}\Q^{(2,A)}_{s,N}( \P_{N_1}u, \P_{N_2}u, \P_{N_3}u) \label{Q2A2}.
\end{align}
Consider first the term \eqref{Q2A1}, which we split into two cases. First, we assume that at least one of the highest frequency inputs is a $V$; say the $n_2$-frequency. With $u:=Y_{N}+V_N\in B_{R}$, Bernstein and Cauchy-Schwarz inequalities imply
\begin{align*}
|\Q^{(2,A)}_{s,N}( \P_{N_1}u, \P_{N_2}u, V_{N_3})| 
& \leq \| |D_{x}|^{2s}\H \P_{N_1}u\|_{L^{\infty}} \| V_{N_2} \cdot \P_{N_3}u\|_{L^1} \\
& \les N_{1}^{2s} \big(N_{1}^{\frac 12}\|\P_{N_1}u\|_{L^2}\big) \|V_{N_2}\|_{L^2}\|\P_{N_3}u\|_{L^{2}} \\
& \les N_{1}^{2s}N_{2}^{-s-\frac 12} N_{3}^{-\frac 12}  \big(N_{1}^{\frac 12}\|\P_{N_1}u\|_{L^2}\big) \|V_{N_2}\|_{H^{s+\frac 12}}\|\P_{N_3}u\|_{H^{\frac 12}}.
\end{align*}
Note that in the second inequality we used that $\H$ commutes with $\P_{N_1}$ and is a bounded operator on $L^2$.
Then, by Young's inequality and that $s\leq 1$,
\begin{align*}
\sum_{ \substack{N_1, N_2, N_3 \in 2^{\N_0} \\  N_{2} \sim N_3 \ges N_1}} \ld |\Q^{(2,A)}_{s,N}( \P_{N_1}u, \P_{N_2}u, V_{N_3})|  & \les \sum_{ \substack{N_1, N_2, N_3 \in 2^{\N_0} \\  N_{2} \sim N_3 \ges N_1}}  \ld \frac{N_{1}^{s}}{N_2}  \big(N_{1}^{\frac 12}\|\P_{N_1}u\|_{L^2}\big) \|V_{N_2}\|_{H^{s+\frac 12}}\|\P_{N_3}u\|_{H^{\frac 12}} \\
& \les \sum_{N_2 \sim N_3} \|V_{N_2}\|_{H^{s+\frac 12}}\|\P_{N_3}u\|_{H^{\frac 12}}  \sum_{N_1 \leq N_2} \frac{N_{1}^{s}}{N_2}\big(N_{1}^{\frac 12}\|\P_{N_1}u\|_{L^2}\big)  \\
& \les \ld R \|V\|_{H^{s+\frac12}}\\
 &\leq C_{L} \ld^2 R + \tfrac{1}{4L} \|V\|_{H^{s+\frac 12}}^{2},
\end{align*}
which is acceptable.
Now consider the case when both high-frequency inputs are Gaussians $Y$. By Cauchy-Schwarz, we have 
\begin{align*}
|\Q^{(2,A)}_{s,N}( \P_{N_1}u, Y_{N_2}, Y_{N_3})|&  \les \| |D_{x}|^{2s}\P_{N_1}u\|_{L^2} \|\wt{\P}_{N_1}\P_{\neq 0} ( Y_{N_2} Y_{N_3})\|_{L^2}   \\
&\les N_{1}^{2s-\frac 12}R^{\frac 12} \|\wt{\P}_{N_1}\P_{\neq 0} ( Y_{N_2} Y_{N_3})\|_{L^2}.
\end{align*}
Now, taking an expectation, we find
\begin{align*}
\sum_{ \substack{N_1, N_2, N_3 \in 2^{\N_0} \\  N_{2} \sim N_3 \ges N_1}}  \E[ |\Q^{(2,A)}_{s,N}( \P_{N_1}u, Y_{N_2}, Y_{N_3})| \ind_{B_{R}}(u)] & \les_{R} \sum_{ \substack{N_1, N_2, N_3 \in 2^{\N_0} \\  N_{2} \sim N_3 \ges N_1}} N_{1}^{2s-\frac 12} \E[ \|\wt{\P}_{N_1}\P_{\neq 0} ( Y_{N_2} Y_{N_3})\|_{L^2}] \\
& \les_{R} \sum_{ \substack{N_1, N_2, N_3 \\  N_{2} \sim N_3 \ges N_1}} N_{1}^{2s-\frac 12} 
\bigg( \sum_{\substack{n_{123}=0 \\ |n_j|\sim N_j, j=1,2,3}} \frac{1}{(|n_2||n_3|)^{2s+1}} \bigg)^{\frac 12}\\
& \les_{R} \sum_{ \substack{N_1, N_2, N_3 \\  N_{2} \sim N_3 \ges N_1}}
 N_{1}^{2s}N_{2}^{-2s-\frac 12} \les_{R} 1,
\end{align*}
uniformly in $N\in \N$. This completes the estimation of \eqref{Q2A1}.

It remains to consider the contribution from the term \eqref{Q2A2}. 
We split into three cases depending on the number of $V$ terms appearing in the two highest frequencies (which are $N_1$ and $N_2$). 
First, assume that there are no $V$ terms. We then claim that
\begin{align}
\E[ \|\wt{\P}_{N_3}\big[ \P_{N_2}Y \cdot \H|D_x|^{2s}\P_{N_1}Y \big]\|_{L^2}] 
\les N_{3}^{\frac 12}N_{1}^{-\frac 12}  \quad \text{under} \quad N_1\sim N_2\gg N_3,
\label{Q2AYY}
\end{align}
uniformly in $N\in \N$.
Indeed, by Cauchy-Schwarz, we have 
\begin{align*}
[\text{LHS} \eqref{Q2AYY}]^{2} \les  \sum_{ \substack{n_{123}=0 \\ |n_j|\sim N_j, j=1,2,3}} \frac{|n_1|^{4s}}{|n_1|^{2s+1}|n_2|^{2s+1}} \les N_3 N_{1}^{-1}.
\end{align*}
Now, by Cauchy-Schwarz,
\begin{align*}
|\Q^{(2,A)}_{s,N}( \P_{N_1}u, \P_{N_2}u, \P_{N_3}u) | &= | \jb{ \P_{N_2}Y \cdot \H |D_x|^{2s}\P_{N_1}Y, \P_{N_3}u}|  \\
&\les N_{3}^{-\frac 12} R^{\frac 12} \| \wt{\P}_{N_3}\big[ \P_{N_2} Y \cdot \H |D_x|^{2s} \P_{N_1}Y \big]\|_{L^2} 
\end{align*}
Then, by \eqref{Q2AYY},
\begin{align*}
\sum_{ \substack{N_1, N_2, N_3 \in 2^{\N_0} \\  N_{2} \sim N_1 \gg N_3}} 2\ld \E \big[ |\Q^{(2,A)}_{s,N}( \P_{N_1}Y, \P_{N_2}Y, \P_{N_3}(Y+V)) | \ind_{B_R}(Y+V)\big]  & \les_{\ld,R} \sum_{ \substack{N_1, N_2, N_3 \in 2^{\N_0} \\  N_{2} \sim N_1 \gg N_3}} N_{1}^{-\frac 12} \les_{\ld, R} 1. 
\end{align*}

Now, consider the case when there is exactly one $V$ and one $Y$, say $Y_{N_1}$ and $V_{N_2}$.  The remaining case is dealt with in the same way since $N_1 \sim N_2$. Then, by Bernstein and Young inequalities, we have 
\begin{align*}
\ld |\Q^{(2,A)}_{s,N}( Y_{N_1}, V_{N_2}, \P_{N_3}u) | & \les \ld \| V_{N_2}\cdot \H|D_x|^{2s}Y_{N_1}\|_{L^1} (N_{3}^{\frac 12}\|\P_{N_3}u\|_{L^2}) \\
& \les \ld  N_{2}^{-s-\frac 12}N_{1}^{2s}R \|Y_{N_1}\|_{L^2} \|V_{N_2}\|_{H^{s+\frac 12}} \\
& \leq C_{\eps} \ld^2  N_{1}^{2s-1+\dl} R^{2} \|Y_{N_1}\|_{L^2}^{2} + \eps N_{1}^{-\dl}\|V\|_{H^{s+\frac 12}}^{2},
\end{align*}
where $0<\dl<1$ and $\eps=\eps(s,\dl)>0$ such that 
\begin{align*}
2L \eps \sum_{ \substack{N_1, N_2, N_3 \in 2^{\N_0} \\  N_{2} \sim N_1 \gg N_3}}N_{1}^{-\dl} \leq \frac{1}{4L}.
\end{align*}
For the first term, we then get 
\begin{align*}
N_{1}^{2s-1+\dl} \E[ \|Y_{N_1}\|_{L^2}^{2} ] \les N_{1}^{-1+\dl}
\end{align*}
which is summable.

Finally, we have the case where both high frequency factors are $V$ terms. Similar to the previous case, we have
\begin{align*}
|\Q^{(2,A)}_{s,N}( V_{N_1}, V_{N_2}, \P_{N_3}u) | & \les \| V_{N_1}\cdot \H|D_x|^{2s}V_{N_2}\|_{L^1} (N_{3}^{\frac 12}\|\P_{N_3}u\|_{L^2})  \les RN_1^{s-\frac 12} \|\P_{N_1}V\|_{L^2} \|V\|_{H^{s+\frac 12}}.
\end{align*}
If $s<1$, then $s-\frac 12 <\frac 12$ and we have
\begin{align*}
|\Q^{(2,A)}_{s,N}( V_{N_1}, V_{N_2}, \P_{N_3}u) |  \les N_{1}^{-s+1} R^{2} \|V\|_{H^{s+\frac 12}}.
\end{align*}
By repeating the Cauchy-Schwarz argument from the previous case, we control the contribution of this term. 
If $s=1$, by \eqref{triangle} we have 
\begin{align*}
\ld |\Q^{(2,A)}_{s,N}( V_{N_1}, V_{N_2}, \P_{N_3}u) | 
& \les \ld R^{\frac 12} N_1^{2} \| V_{N_1}\|_{L^2} \|V_{N_2}\|_{L^2} \\
& \les \ld R^{\frac 12}N_{1}^{-\frac 14} \|V\|_{H^{\frac 12}}^{\frac 34} \|V\|_{H^{\frac{3}{2}}}^{\frac 54}\\
& \les \ld R^{\frac 12} N_1^{-\frac14}(R+\|Y\|_{H^{\frac 12}})^{\frac 38} \|V\|_{H^{\frac 32}}^{\frac 54} \\
& \leq C_{\eps}\ld^{\frac 85} R^{\frac{4}{5}} N_{1}^{-\frac{1}{5}} (R+\|Y\|_{H^{\frac 12}})^{\frac 35} + \eps N_{1}^{-\frac 13} \|V\|_{H^{\frac 32}}^{2}.
\end{align*}
for any $\eps>0$. Thus, the contribution from this term is controlled after performing the dyadic summations and choosing $\eps>0$ sufficiently small depending on $L$.

This completes the estimation of $\Q^{(2,A)}_{s,N}$ and thus the proof of Lemma~\ref{LEM:expint}.

\subsection{Proof of Proposition~\ref{PROP:s12}}\label{SEC:sing}
We revisit some of the computations we made in the previous sections to show the divergent lower bounds.  
Let $0<s\leq \frac 12$, $N$ fixed to be chosen large enough later, and suppose $t\neq 0$.
By the decomposition \eqref{Qdecomp} and Cauchy's inequality, we have
\begin{align}
\text{QI}_{s,N} \geq (1-\eps) \E \bigg[ \bigg| \int_{0}^{t} Q^{(1)}_{s,N}( S(-t') u_0^{\o})dt'\bigg|^{2}\bigg] -C_{\eps}  \E \bigg[ \bigg| \int_{0}^{t} Q^{(2)}_{s,N}( S(-t') u_0^{\o})dt'\bigg|^{2}\bigg], \label{QIfail2}
\end{align}
for any $\eps>0$.
Now
\begin{align*}
\int_{0}^{t} Q^{(2)}_{s,N}( S(-t') u_0^{\o})dt'= \sum_{ \substack{ n_{123}=0 \\ 0<|n_j|\leq N, \, j=1,2,3}} \frac{|n_1|^{2s}n_1}{1+|n_1|}  \frac{e^{it\Phi(\cj{n})}-1}{\Phi(\cj{n})} \prod_{j=1}^{3} \frac{g_{n_j}}{|n_j|^{s+1/2}},
\end{align*}
where 
\begin{align}
\Phi(\cj{n}):= \Phi(n_1,n_2,n_3) : = \frac{n_1}{1+|n_1|} +\frac{n_2}{1+|n_2|} +\frac{n_3}{1+|n_3|}. \label{Phase}
\end{align}
By the Isserlis' theorem, we have 
\begin{align}
\begin{split}
\E \bigg[ \bigg| \int_{0}^{t} Q^{(2)}_{s,N}( S(-t') u_0^{\o})dt'\bigg|^{2}\bigg]  & \les  t^{2} \sum_{n_{123}=0} \frac{|n_1|^{4s}}{(|n_1||n_2||n_3|)^{2s+1}} +  t^{2} \sum_{n_{123}=0} \frac{|n_1|^{2s} |n_{2}|^{2s}}{(|n_1||n_2||n_3|)^{2s+1}}\\ 
& \les t^{2}.
\end{split} \label{QIfail3}
\end{align}
Next, we also have 
\begin{align}
\E \bigg[ \bigg| \int_{0}^{t} Q^{(1)}_{s,N}( S(-t') u_0^{\o})dt'\bigg|^{2}\bigg]  &\sim  \sum_{ \substack{ n_{123}=0 \\ 0<|n_j|\leq N, \, j=1,2,3}}  \frac{| \Psi_{s}(\cj{n})|^{2}}{(|n_1||n_2||n_3|)^{2s+1}} \bigg| \frac{e^{it\Phi(\cj{n})}-1}{\Phi(\cj{n})} \bigg|^{2}  \notag \\
& \ges  t^2 \sum_{ \substack{ n_{123}=0 \\ 0<|n_j|\leq N, \, j=1,2,3}}  \frac{| \Psi_{s}(\cj{n})|^{2}}{(|n_1||n_2||n_3|)^{2s+1}},  \label{lowerbd}
\end{align}
where we used that $|\Phi(\cj{n})| \les 1$ for all $(n_1,n_2,n_3)$ and the assumption $|t|\les 1$.

We now split into two cases. First, suppose that $0<s<\frac 12$. Let $1\leq A \ll N$ to be chosen later.
Consider the subset of frequency interactions where $n_1\in \frac 12 N + [\frac 34 A, A]$, $n_{2}\in -\frac 12 N + [0,\frac{1}{4} A]$. Then, $n_3\in [-\frac 54 A, -\frac{3}{4}A]$ and by the mean value theorem,
\begin{align}
|\Psi_{s}(\cj{n})| \ges N^{2s} A  -A^{2s+1} \ges A N^{2s}. \label{phaselower}
\end{align}
Thus,
\begin{align*}
\eqref{lowerbd} &\ges t^{2}  A^{1-2s}N^{-2} \big| \{ (n_1,n_2,n_3) \in \Z_{\ast} \, : \, n_{123}=0, \,\,  n_1\in \tfrac 12 N + [\tfrac 34 A, A] ,  \\ 
&  \hphantom{XXXXXXXXXXXXXXXXXXXXXX} n_{2}\in -\tfrac 12 N + [0,\tfrac{1}{4} A], \, n_3 \in [-\tfrac 54 A, -\tfrac{3}{4}A]\} \big| \\
& \sim t^{2} A^{3-2s}N^{-2}.
\end{align*}
Choose $A=\dl N$ for any $0<\dl\ll \frac 12$ so that \eqref{phaselower} holds. Then, we choose $N$ large so that $N\gg \wt{C} C_{\eps}$, where $\wt{C}$ is the implicit constant in \eqref{QIfail3} at which point by \eqref{QIfail2} we obtain \eqref{QIfail}. 

Now assume that $s=\frac 12$. On the subset 
\begin{align*}
\G:= \{ (n_1,n_2,n_3)\in \Z_{\ast} \, : \, n_{123}=0,\,\, |n_j| \leq N, \,\, j=1,2,3, \,\, n_2,n_3<0<n_1, |n_1|\sim |n_2|\gg |n_3|\},
\end{align*}
we have 
\begin{align*}
\Psi_{\frac 12}(\cj{n})=n_1^{2}-n_2^2 -n_3^2= -n_3(n_1-n_2+n_3)= |n_3|(|n_1|+|n_2|-|n_3|).
\end{align*}
Thus, 
\begin{align*}
\eqref{lowerbd} \ges t^{2} \sum_{\G}  \frac{| \Psi_{\frac 12}(\cj{n})|^{2}}{(n_1 n_2 n_3)^{2}} \ges t^{2} \sum_{\G} \frac{1}{n_2^{2}}-t^{2} \sum_{\G} \frac{1}{n_1^2 n_2^2} \sim t^{2} (\log N)^2.
\end{align*}
This completes the $s=\frac 12$ case of \eqref{QIfail} and thus the proof of proof of Proposition~\ref{PROP:s12}.
\begin{ackno}\rm 
The author was supported by the European Research Council (grant no.~864138 ``SingStochDispDyn").
\end{ackno}


\end{document}